\documentclass[12pt]{article}
\usepackage{authblk}
\usepackage{fixltx2e}
\usepackage[margin=1.3in]{geometry}

\usepackage{amsmath}

\usepackage{amssymb}
\usepackage{color}

\usepackage{amsthm}
\usepackage{setspace}
\usepackage{hyperref}

\usepackage{amsfonts}
\usepackage{amssymb}
\usepackage{amsmath, amsfonts, bm}
\usepackage{graphicx} 
\usepackage{amssymb}
\usepackage{xcolor}
\usepackage{cleveref}
\usepackage{xfrac}

\newcommand{\mT}{\mathbb{T}}
\newcommand{\T}{\overline{T}}
\newcommand{\spt}{{\rm Sub}^{p}_{\mathbb{T}^{n}}}
\let\ol=\overline

\let\mi=\setminus
\newcommand{\Aut}{{\rm Aut}}

\newcommand{\Sub}{{\rm Sub}}

\newcommand{\N}{{\mathbb N}}
\newcommand{\Z}{{\mathbb Z}}
\newcommand{\R}{{\mathbb R}}
\newcommand{\C}{{\mathbb C}}

\newcommand{\mfS}{{\mathfrak{S}}}
\newcommand{\mfH}{{\mathfrak{H}}}

\newcommand{\A}{{\mathcal A}}
\newcommand{\Hc}{{\mathcal H}}

\newcommand{\K}{{\mathcal K}}
\newcommand{\D}{{\mathcal D}}
\newcommand{\U}{{\mathcal U}}

\newcommand{\spg}{{\rm Sub}^p_G}
\newcommand{\spp}{{\rm Sub}^{p}}

\newcommand{\Id}{\,{\rm Id}}

\newcommand{\GL}{{\rm GL}}

\newcommand{\PSL}{{\rm PSL}}

\newtheorem{thm}{Theorem}[section]
\newtheorem{cor}[thm]{Corollary}
\newtheorem{lem}[thm]{Lemma}
\newtheorem{prop}[thm]{Proposition}

\theoremstyle{definition}

\newtheorem{rem}[thm]{Remark}

\theoremstyle{remark}

\title{Dynamics of actions of automorphisms on the space of one-parameter subgroups of a torus and applications}
\author{Debamita Chatterjee, Himanshu Lekharu and Riddhi Shah\\ 
School of Physical Sciences\\
Jawaharlal Nehru University\\
New Delhi 110067, India}
\date{04 November, 2025}

\begin{document}

\maketitle
\begin{abstract}
For a connected Lie group $G$, we study the dynamics of actions of automorphisms of $G$ on certain compact invariant subspaces 
of closed subgroups of $G$ in terms of distality and expansivity. We show that only the finite order automorphisms of $G$ act distally 
on Sub$^p_G$, the smallest compact space containing all closed one-parameter subgroups of $G$, when $G$ is any $n$-torus, $n\in\N$. 
This enables us to relate distality of the $T$-action on Sub$^p_G$ with that of the $T$-action on $G$ and characterise 
the same in terms of compactness of closed subgroups generate by $T$ in the group $\Aut(G)$, in case $G$ is not a vector group. 
We also extend these results to the action of subgroups of automorphisms. We show that any $n$-torus $G$, $n\geq 2$, more 
generally, any connected Lie group $G$ whose central torus has dimension at least 2, does not admit any automorphism 
which acts expansively on Sub$^p_G$. Our results generalise some results on distal actions by Shah and Yadav, and by  
Chatterjee and Shah, and some results on expansive actions by Prajapati and Shah.
\end{abstract}

\smallskip
\noindent{\it Keywords and phrases}: distal and expansive actions of automorphisms, 
 Lie groups, $n$-torus, one-parameter subgroups,
Chabauty topology. 

\smallskip
\noindent 2020 Mathematics Subject Classification:  37B05 (Primary), 22E15, 22D45 (Secondary)

\tableofcontents

\section{Introduction}
Distal and expansive actions are two significant areas of study in topological dynamics. The notion 
of distality was introduced by David Hilbert to study non-ergodic actions on compact spaces. Distal actions have been studied 
by Ellis \cite{E} and Furstenberg \cite{Fu} on compact spaces, and Abels \cite{Ab1}, Moore \cite{Mo}, Raja-Shah \cite{RSh1, RSh2}
and Shah \cite{Sh1} on Lie groups, see the references cited in \cite{RSh2}. 
The notion of expansivity was introduced by Utz to study chaotic orbits. Since then it has been widely studied by many in different contexts, see 
 Gl\"ockner-Raja \cite{GR}, Shah \cite{Sh2} and Chodhury-Raja \cite{ChoR} and the references cited therein.

For a Hausdorff topological space $X$, a homeomorphism $T$ of $X$  
is said to be {\it distal} (equivalently, $T$ acts {\it distally} on $X$) if for any pair of distinct 
elements $x,y \in X$, the closure of the double orbit 
$\{(T^n(x), T^n(y)) \mid n\in \Z\}$ in $X \times X$ does not intersect 
the diagonal, i.e.\ for $x,y \in X$ with $x \neq y $, 
$\ol{\{(T^n(x), T^n(y))\mid n \in \Z\}} \cap \{(d,d)\mid d \in X\}=\emptyset$.
 If $X$ is a compact metric space with the metric $d$, then $T$ is distal if and only if for every pair 
  $x,y \in X$ with $x \neq y$, $\mathrm{inf}\{ d(T^n(x), T^n(y)) \mid n \in \Z\} > 0.$ If $X$ is a topological
   group and $T$ is an automorphism then $T$ is distal if and only if for every $x \in G$ with $x \neq e$, 
   $e \notin \ol{\{T^n(x)\} \mid n \in \Z\} }$. 

Let $X$ be a metrizable topological space with a metric $d$, a homeomorphism $T$ of $X$ is said to be {\it expansive} 
if there exists $\epsilon> 0$ such that the following hold: If $x,y \in X$ with $ x \neq y$, then
  $\sup\{d(T^n(x), T^n(y))\mid n\in\Z\}> \epsilon$. Here, $\epsilon$ is called an expansive constant for $T$. It is known that 
  the expansivity of a homeomorphism on any compact metric space is independent of the metric. For a locally compact 
  topological group $G$, an automorphism $T$ is expansive if $\cap_{n \in \Z}T^n(U)= \{e\}$ for some neighbourhood 
   $U$ of $e$. In particular, if $T$ is expansive on $G$, then $G$ is first countable and admits a left invariant metric 
   and this definition agrees with the one given above for the metric space. It is also well known that for any infinite compact 
   metric space, the class of distal homeomorphisms and the class of expansive homeomorphisms are mutually disjoint 
   (cf.\ \cite{Bry}, Theorem 2). 

For a locally compact Hausdorff topological group $G$, let $\Sub_G$ denote the set of all closed
 subgroups of $G$ equipped with Chabauty topology. Note that $\Sub_G$ is compact and Hausdorff, and 
 if $G$ is second countable then it is metrizable. Let $\Aut(G)$ denote the group of all automorphisms of $G$
   (i.e.\ homomorphisms of $G$ which are also homeomorphisms). There is a natural action of $\Aut(G)$ on $\Sub_G$; 
   namely,  $(T,H) \mapsto T(H), T \in \Aut(G), H \in \Sub_G$. This action gives rise to a homomorphism from $\Aut(G)$
   to $\mathrm{Homeo}(\Sub_G)$. Motivated by the fact that the image of $\Aut(G)$ yields a large class of 
   homeomorphisms of $\Sub_G$, we aim to explore the dynamics of this subclass, 
   particularly in the context of distality and expansivity. 

Let $G$ be a connected Lie group and $T \in \Aut(G) $. We say that $T$ acts distally (resp.\ expansively) on 
$\Sub_G$ if the homeomorphism of $\Sub_G$ corresponding to $T$ is distal (resp.\ expansive). 
Shah and Yadav \cite{ShY3} first studied such distal actions for connected Lie group $G$ and they showed that for a large
  class of Lie group $G$, which does not have non-trivial compact connected central subgroup, an automorphism 
  $T$ acts distally on $\Sub^a_G$, the space of closed abelian subgroups of $G$, if and only if $T$ generates
  a compact subgroup in $\Aut(G).$ Moreover they showed that if $G$ does not have any nontrivial compact connected 
  central subgroup, then $T$ acts distally on 
 $\Sub^a_G$ implies that $T$ is distal (on $G$); see Corollary 3.7 in \cite{ShY3}. Shah and Prajapati first studied such expansive actions 
 for locally compact second countable groups. They showed that if $T$ acts expansively on $\Sub_G$, then $T$ is expansive on $G$,
 moreover a (nontrivial) connected Lie group $G$ does not admit any automorphism which acts expansively on $\Sub^a_G$.  Shah, 
 together with Palit \cite{PaSh}, and with Palit and Prajapati \cite{PaPrSh}, investigated the distal and expansive action of 
 automorphisms on $\Sub^a_G$ for discrete groups $G$, where $G$ is either polycyclic or a lattice in a connected Lie group.
Note that $\Sub^a_G$ is very large for many groups $G$, in particular if $G$ is abelian, then it is the same as $\Sub_G$. 

A natural question that arose from the above investigations was whether distal (resp.\ expansive) actions of automorphisms of $G$ can be
 characterised (resp.\ exist) on other smaller invariant subspace of $\Sub^a_G$. A recent work by Chatterjee and Shah \cite{CSh} 
 considers one such subspace: the class of (nontrivial) smallest closed connected abelian subgroups, namely closed
 one-parameter subgroup of $G$.  Let $\spg$ denote the smallest closed subset of $\Sub_G$ containing 
 all closed one-parameter subgroups of $G$. Note that $\spg$ is compact and it is invariant under the
 action of $\Aut(G)$. In Theorem 1.1 of \cite{CSh}, the class of distal actions of automorphisms on the 
 space $\spg$ is characterised when $G$ is a Lie group without central torus as follows; if $G$ is abelian, i.e.\ 
 $G$ is isomorphic to $\R^n$ for some $n\in\N$, then $T$ acts distally on $\spg$ if and only if $T \in \K\D$, where $\K$ is a 
 compact subgroup of $\GL(n,\R)$  and $\D$ is the center of $\GL(n,\R)$, and if $G$ is not abelian, then $T$ acts 
 distally on $\spg$ if and only if $T \in \K$, a compact subgroup of $\Aut(G)$. This was also generalised to characterise 
 $T$ which act distally on the maximal central torus, see Theorem 1.4 of \cite{CSh}.
 
 We know that the maximal compact connected central subgroup of a connected Lie group (maximal central torus) is either 
 trivial or isomorphic to $\mT^n$, the $n$-torus (also known as the central torus of $G$), for some $n\in\N$. The work of Chatterjee 
 and Shah \cite{CSh} does not cover the case when $G$ is a torus, or the more general case of connected Lie groups $G$  
 without any condition on the $T$-action on the central torus. Moreover, the behaviour of distal 
 and expansive actions on $\spp_{\mT^n}$ has not yet been explored. In this paper, we investigate these 
 dynamical properties when $G = \mT^n$. We get the following which characterises distal actions on $\spt$. 

\medskip
\noindent {\bf \Cref{distal-t}.}
\emph{Let $G$ be an $n$-torus for some $n\in\N$, and let $T$ be an automorphism on $G$. Then $T$ acts distally on 
$\spg$ if and only if $T^m=\Id$, the identity map, for some $m\in\N$.}
\medskip

The following relates the distal action of automorphism of $G$ on $\spg$ and distlity of the automorphism and it generalises Theorem 1.2 of \cite{CSh}. 

\medskip
\noindent {\bf \Cref{distal-g}.}
\emph{Let $G$ be a connected Lie group which is not a vector group and let $T \in \Aut(G)$. If $T$ acts distally on $\spg$, then $T$ acts distally on $G$.}
\medskip

For a Lie group $G$, let $\Aut(G)$ be endowed with the compact-open topology. It is a Lie group which is identified with a closed subgroup of the 
Lie algebra automorphisms of the Lie algebra of $G$.  Let $\Sub^c_G$ denote the set of all discrete cyclic subgroups of $G$. 
Using \Cref{distal-t} on the distal actions of automorphisms on $\Sub^p_{\mT^n}$, we obtain 
a characterisation of distal actions of automorphisms on the space $\spg$ for any connected Lie group $G$ in the following.

\medskip
\noindent {\bf \Cref{gen-Lie}.}
\emph{Let $G$ be a connected Lie group and let $T \in \Aut(G)$. Consider the following statements.
\begin{enumerate}
\item $T$ acts distally on $\spg$.
\item $T$ acts distally on $\ol{\Sub^c_G}$. 
\item $T$ acts distally on $\Sub^a_G$. 
\item $T$ acts distally on $\Sub_G$. 
\item $T$ is contained in a compact subgroup of $\Aut(G)$. 
\end{enumerate}
Then $(2-5)$ are equivalent. If $G$ is not a vector group, then $(1-5)$ are equivalent.}
 \medskip

Theorem \ref{distal-t} generalises several results from the work of Chatterjee and Shah \cite{CSh}, notably Theorems 1.1\,(2), 1.2 and 1.4 of \cite{CSh}. 
Note that when $G$ is a vector group, Theorem 1.1\,(1) of \cite{CSh} has characterised the distal action of automorphisms 
on $\spg$. We also generalise Theorem 1.5 of \cite{CSh} about the action of a subgroup of $\Aut(G)$ on $\spg$, see Theorem \ref{subgp-action}.

For expansive actions, it is known that $\mT^n$ does not admit any automorphism which acts expansively on $\Sub_{\mT^n}$ (cf.\ \cite{PrSh}, Theorem 3.2). 
The question arises whether any automorphism of $\mT^n$ acts expansively on a smaller invariant subspace of $\Sub_{\mT^n}$. The following shows that 
in case of $\spt$ the answer is negative. 

\medskip
\noindent {\bf \Cref{expansive-t}.}
\emph{Let $G=\mT^n$, the $n$-torus, for any $n\geq 2$. Then $G$ does not admit any automorphism that acts expansively on $\spg$.}
 \medskip

As a consequence of the above theorem, we get the following for a larger class of connected Lie groups. 

\medskip
\noindent {\bf \Cref{Lie-t}.}
\emph{Let $G$ be a connected Lie group such that it contains a central torus of dimension at least $2$. Then $G$ does not admit any automorphism that acts 
expansively on $\spg$.}
\medskip

 \Cref{Lie-t} generalises Theorem 3.1 of \cite{PrSh}, in case $G$ is a connected Lie group as above.  

For many groups $G$, compact spaces $\Sub_G$, $\Sub^a_G$ and $\Sub^c_G$ are identified (see Baik and Clavier \cite{BC1, BC2}, 
Bridson et al.\  \cite{BrHK}, Harmrouni and Kadri \cite{HaKa}, and also Pourezza and Hubbard \cite{PHu}); one can also identify $\Sub^p_G$ 
for some groups $G$, e.g.\ $\Sub^p(\R^n)$ homeomorphic to $\R\mathbb{P}^{n-1}$, the real projective space of dimension $n-1$, $n\geq 2$. 
The study of the action of $\Aut(G)$ on $\Sub_G$ and on its closed (compact) invariant subspaces leads to a better understanding of dynamics 
on these spaces.

For a subgroup $H$ of $G$, let $H^0$ denote that connected component of the identity $e$ in $G$ and $\ol{H}$ is the closure of $H$ in $G$; 
both $H^0$ and $\ol{H}$ are subgroups of $G$. A $k$-torus is a compact connected abelian Lie group of dimension $k$, $k\in\N$. Any compact 
abelian subgroup of divisible if and only if it is connected (i.e.\ it is a torus). A maximal central torus (a maximal compact connected central subgroup) 
of $G$ is characteristic in $G$. Any one-parameter subgroup in $G$ is a a continuous homomorphism from $\R$ to $G$. It is either closed 
and isomorphic to $\R$, $\{e\}$ or a $1$-torus $\mT^1$, or its closure is a $k$-torus for some $k\geq 2$ (cf.\ \cite{CSh}, Lemma 2.3). We may denote 
a one-parameter subgroup $\{x_t\}_{t\in\R}$ by just $\{x_t\}$. 

In \S\,2 we discuss the topology of $\Sub_G$ and the action of $\Aut(G)$ on $\Sub_G$. We also derive some properties of $\spt$ and prove some 
useful lemmas about its structure. In \S\,3 we prove results on distal actions of automorphisms $T$ on $\spt$ and also that of automorphism 
groups of $\mT^n$ on $\spt$. In \S\,4 we prove a result on the orbits of subspaces of $\R^n$ and prove the main result about expansive actions. 

\section{Structure and properties of $\Sub_G$ and $\spt$} 

For any locally compact (Hausdorff) group $G$, the space $\Sub_G$, of all closed subgroups of $G$ is endowed with the Chabauty topology, 
which is generated by a sub-basis 
$$
\{\U_1(K)\mid K\subset G \mbox{ is compact}\}\cup\{\U_2(U)\mid U\subset G \mbox{ is open}\},$$  
where $\U_1(K)=\{H\in\Sub_G\mid H\cap K=\emptyset\}$ and $\U_2(U)=\{H\in\Sub_G\mid H\cap U\neq\emptyset\}$. Note that 
$\Sub_G$ is compact and Hausdorff, and it is metrizable if $G$ is second countable (cf \cite{BP}, Lemma E.1.1). 
Since we deal only with closed subgroups of a connected Lie group $G$, we have that $\Sub_G$ is metrizable. 
The following criteria of convergence in $\Sub_G$ is well-known (see e.g.\ Proposition E.1.2 in \cite{BP}). 

\begin{lem}\label{conv}
Let $G$ be a connected Lie group. A sequence 
$\{H_n\}\subset \Sub_G$ converges to $H\in \Sub_G$ 
if and only if the following hold:
\begin{enumerate}
\item[${\rm(I)}$] For $g\in G$, if there exists a subsequence $\{H_{n_k}\}$ of 
$\{H_n\}$ with $h_k\in H_{n_k}$, $k\in\N$, such that $h_k\to g$ in $G$, then $g\in H$. 
\item[${\rm(II)}$] For every $h\in H$, there exists a sequence $\{h_n\}_{n\in\N}$ such that
 $h_n\in H_n$, $n\in\N$, and $h_n\to h$.
\end{enumerate}
\end{lem}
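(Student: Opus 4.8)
The plan is to exploit that, since $G$ is a connected Lie group, it is locally compact, second countable and metrizable, so $\Sub_G$ is metrizable and its topology is detected by sequences; moreover it suffices to test convergence against the subbasic sets $\U_1(K)$ and $\U_2(U)$ generating the Chabauty topology. I would prove both implications by pairing condition (II) with the ``open'' subbasis $\{\U_2(U)\}$ and condition (I) with the ``compact'' subbasis $\{\U_1(K)\}$.

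For the forward direction, I would assume $H_n\to H$. To obtain (II), fix $h\in H$ and a countable decreasing neighbourhood basis $\{U_k\}$ of $h$ (say, metric balls of radius $1/k$). Since $h\in H\cap U_k$ we have $H\in\U_2(U_k)$, so convergence yields an index $N_k$ with $H_n\cap U_k\neq\emptyset$ for all $n\geq N_k$; arranging the $N_k$ to be nondecreasing (using $U_{k+1}\subset U_k$) and choosing, for each $n$ with $N_k\leq n<N_{k+1}$, some $h_n\in H_n\cap U_k$, produces a sequence $h_n\in H_n$ with $h_n\to h$. To obtain (I), suppose $h_k\in H_{n_k}$ with $h_k\to g$ but $g\notin H$. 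As $H$ is closed and $G$ is locally compact, there is a compact neighbourhood $K$ of $g$ with $H\cap K=\emptyset$, i.e.\ $H\in\U_1(K)$; convergence then forces $H_n\cap K=\emptyset$ eventually, contradicting $h_k\to g\in\mathrm{int}\,K$.

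For the reverse direction, I would assume (I) and (II) and show that $H_n$ lies eventually in every subbasic neighbourhood of $H$, since any basic neighbourhood is a finite intersection of such sets. If $H\in\U_2(U)$, pick $h\in H\cap U$; by (II) there is $h_n\in H_n$ with $h_n\to h$, so $h_n\in U$ eventually and $H_n\in\U_2(U)$. If $H\in\U_1(K)$ but $H_n\cap K\neq\emptyset$ for infinitely many $n$, choose $h_k\in H_{n_k}\cap K$ along a subsequence; compactness of $K$ gives a further subsequence with $h_k\to g\in K$, and (I) yields $g\in H$, contradicting $H\cap K=\emptyset$. Hence $H_n\in\U_1(K)$ eventually, and $H_n\to H$.

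The routine parts are the subbasis bookkeeping and the diagonal selection in (II). The point needing the most care is ensuring that the approximating sequence in (II) is defined for \emph{every} $n$, not merely along a subsequence, and genuinely converges; symmetrically, in the reverse direction it is the compactness of $K$ that lets one upgrade ``$H_n$ meets $K$ infinitely often'' into an actual limit point lying in $H$. This interplay between the full-sequence statement (II) and the subsequence statement (I) is the main thing to handle correctly.
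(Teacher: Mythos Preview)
Your proof is correct and follows the standard argument for characterising sequential convergence in the Chabauty topology. The paper, however, does not prove this lemma at all: it simply states it as well-known and refers the reader to Proposition~E.1.2 of \cite{BP}. So there is nothing to compare against on the paper's side beyond the citation. Your write-up supplies exactly the kind of proof one finds in that reference; the only cosmetic point worth tightening is to note explicitly that for indices $n<N_1$ in the construction for (II) one may take $h_n=e\in H_n$ (each $H_n$ being a subgroup), so that the sequence $\{h_n\}$ is honestly defined for every $n$.
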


 Recall that for a connected Lie group $G$, there is a natural action of the space $\Aut(G)$ of (bi-continuous) automorphisms of $G$ on $\Sub_G$; 
 namely, the map $H\mapsto T(H)$, $H\in\Sub_G$, $T\in\Aut(G)$. This is a continuous group action on $\Sub^p_G$ by homeomorphisms and 
 it keeps the following subspaces invariant: $\Sub^a_G$ consisting of closed abelian subgroups, $\Sub^c_G$ consisting of discrete cyclic subgroups 
 and its closure, the space of closed one-parameter subgroups and its closure $\Sub^p_G$. For some basic structural properties of $\Sub_G$ and the 
 action of $\Aut(G)$ on it, we refer the readers to \cite{ShY3}. 

Note that $\Aut(\mT^n)= \GL(n, \Z),$ where $\GL(n, \Z)$ denotes the group of invertible $n \times n$ matrices of determinant $\pm 1$ with integer entries. 
The compact open topology on $\GL(n, \Z)$ is discrete. Therefore, any compact subgroup of $\GL(n, \Z)$ is finite, in particular, an element of $\Aut(\mT^n)$  
generates a relatively compact subgroup in $\Aut(\mT^n)$ if and only if it has finite order.

Any one-parameter subgroup in a torus $\mT^n$ is either closed and isomorphic to the trivial subgroup or to $\mT^1$, or its closure 
is isomorphic to an $k$-dimensional torus for $1<m\leq n$ when $n\geq 2$. Moreover, $\spt$ consists of all $k$-tori, $1\leq k\leq n$ and 
the trivial subgroup (cf.\ \cite{CSh}, Lemma 2.3). If $n\ne 1$, $\spt$ is infinite; in fact, the set of closed one-parameter 
subgroups is infinite as the the set of roots of unity is dense in $\mT^n$, and $\mT^n$ is exponential. Note that $\spt$ is countable as 
any $k$-torus in $\mT^n$ corresponds to a vector subspace $V$ of dimension $k$ in the covering group $\R^n$ (of $\mT^n$) such that 
$V$ is generated by $k$ (linearly independent) elements of $\Z^n$.

For the sake of convenience, we state these facts as a lemma about the structure of $\spt$ which is essentially known.

\begin{lem} \label{subptn} For an $n$-torus $\mT^n$, the following holds. 
\begin{enumerate}
\item $\spt=\{\mT^k\mid 1\leq k\leq n\}\cup\{\{e\}\}$.
\item $\spt$ is countable.
\item The trivial subgroup $\{e\}$ is isolated in $\spt$.
\end{enumerate}
\end{lem}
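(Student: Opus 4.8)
Part (1) is a restatement of the description of one-parameter subgroups of a torus recorded in the introduction and established in \cite{CSh} (Lemma 2.3): the closed one-parameter subgroups of $\mT^n$ are precisely $\{e\}$ and the $1$-tori, the closure of any non-closed one-parameter subgroup is a $k$-torus with $k\geq 2$, and the Chabauty closure of the family of all closed one-parameter subgroups is exactly $\{\mT^k\mid 1\leq k\leq n\}\cup\{\{e\}\}$. I would therefore simply invoke this fact, so that $\spt$ is identified with the set of all subtori of $\mT^n$ together with the trivial subgroup, and then derive (2) and (3) from this identification.

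Part (2) is then immediate from part (1). Writing $\pi\colon\R^n\to\mT^n$ for the covering projection, each $\mT^k$ in $\spt$ is $\pi(V)$ for a $k$-dimensional rational subspace $V\subseteq\R^n$, i.e.\ a subspace spanned by vectors in $\Z^n$. Such subspaces are determined by finite subsets of $\Z^n$; since $\Z^n$ is countable and has only countably many finite subsets, there are only countably many rational subspaces. Hence $\spt$ is countable.

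The genuine content is part (3), and here I would argue sequentially, using that $\Sub_{\mT^n}$ is metrizable (as $\mT^n$ is second countable). Suppose, for contradiction, that $\{e\}$ is not isolated; then there is a sequence $H_m\to\{e\}$ in $\spt$ with each $H_m$ a nontrivial subtorus. The first step is to show that this forces $\sup_{x\in H_m}d(x,e)\to 0$, where $d$ is a fixed translation-invariant metric on $\mT^n$: otherwise, after passing to a subsequence, I could choose $h_m\in H_m$ with $d(h_m,e)\geq c$ for some $c>0$, extract by compactness of $\mT^n$ a convergent subsequence $h_{m_k}\to g$ with $d(g,e)\geq c>0$, and thereby contradict condition (I) of \Cref{conv} for the limit $\{e\}$ (which forces $g=e$). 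Once the $H_m$ shrink uniformly toward $e$, I would invoke the no-small-subgroups property of the Lie group $\mT^n$: there is $r_0>0$ such that the only subgroup contained in the closed $r_0$-ball about $e$ is $\{e\}$. For large $m$ we have $H_m$ inside this ball, forcing $H_m=\{e\}$, a contradiction.

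I expect this convergence step to be the main (indeed essentially the only) obstacle: translating Chabauty convergence to $\{e\}$ into a uniform shrinking of the $H_m$ is where \Cref{conv} and compactness do the work, while the no-small-subgroups input is what rules out arbitrarily small nontrivial subtori. Alternatively, one can make part (3) fully explicit and metric-free: choosing the sphere $S_r=\{x\mid d(x,e)=r\}$ with $0<r<r_0$, every nontrivial subtorus $H$ is connected, contains $e$, and (being nontrivial) contains a point outside the $r_0$-ball, so by the intermediate value theorem applied to $x\mapsto d(x,e)$ on $H$ it meets $S_r$; hence the basic Chabauty-open set $\U_1(S_r)$ contains $\{e\}$ but no nontrivial element of $\spt$, isolating $\{e\}$. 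For the torus one can even take $r_0=1/2$, since a nontrivial subtorus $\pi(V)$ contains $\pi(v/2)$ for a primitive $v\in V\cap\Z^n$, and some coordinate of $v$ is odd, whence $d(\pi(v/2),e)\geq 1/2$.
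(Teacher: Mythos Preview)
Your proposal is correct and aligns with the paper's treatment: for (1) both you and the paper invoke \cite{CSh}, Lemma 2.3, and for (2) both use the correspondence with rational subspaces of $\R^n$ to deduce countability. The paper does not actually prove (3) either---it simply records it as known and cites \cite{CSh}, Lemma 2.3---whereas you supply a self-contained argument via the no-small-subgroups property (and a nice Chabauty-open witness $\U_1(S_r)$); this is more than the paper does, but entirely in the same spirit and correct.
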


Lemma \ref{subptn}\,(3) is known (see e.g.\ Lemma 2.3 in \cite{CSh}).

\smallskip
The set $\Sub^p_\mT$ has only two elements, the trivial subgroup $\{e\}$ and $\mT$. Moreover $\Aut(\mT)$ consists of only two elements; namely, 
the identity map and the map $x\mapsto x^{-1}$, and the action of any of this maps on $\Sub^p_\mT$ is distal as well as expansive. If $n\geq 2$, 
$\mT^n$ contains infinitely many subgroups which are isomorphic to $\mT^k$ for each $k$, $0<k<n$, as observed above. Since $\mT^n$ is 
compact, the Chabauty topology on $\Sub_{\mT^n}$ is induced by the Hausdorff metric (cf.\ \cite{BP}, Proposition E.1.3), and it allows us to use 
results from the theory of compact abelian metric groups related to the Hausdorff metric. 
We now state a useful lemma by Berend (cf.\ \cite{Be}, Lemma 4.7). 

\begin{lem} \rm{(Berend \cite{Be})} \label{Berend1}
Let $G$ be a compact abelian metric group and let $\Gamma$ be the dual group. A sequence $\{G_m\}_{m=1}^\infty$ of closed subgroups 
of $G$ satisfies $G_m \to G$ (in the Hausdorff metric) if and only if for every nonzero $ \gamma \in \Gamma$ we have 
$\gamma \notin \mathrm{Ann}(G_m)$ for sufficiently large $m$ (where $\mathrm{Ann}(H)$ denotes the annihilator in $\Gamma$ of a 
closed subgroup $H$ of $G$). 
\end{lem}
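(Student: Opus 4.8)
The plan is to prove both implications by combining Pontryagin duality with the Chabauty convergence criterion of \Cref{conv}, using throughout that the dual $\Gamma$ of the compact abelian group $G$ is discrete, that every closed subgroup of $G$ is compact, and that each $\gamma\in\Gamma$ is a continuous homomorphism $G\to\mathbb{T}$ whose image is therefore a compact subgroup of $\mathbb{T}$.

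For the forward implication (that $G_m\to G$ forces the annihilator condition), I would argue by contradiction. Fix a nonzero $\gamma\in\Gamma$ and suppose $\gamma\in\mathrm{Ann}(G_{m_k})$ along a subsequence. Since $\gamma\neq 0$ there is $g\in G$ with $\gamma(g)\neq 1$; because $G_{m_k}\subseteq G$ and $G_{m_k}\to G$, \Cref{conv}(II) lets me pick $g_k\in G_{m_k}$ with $g_k\to g$. Continuity of $\gamma$ then gives $1=\gamma(g_k)\to\gamma(g)\neq 1$, a contradiction; hence $\gamma\notin\mathrm{Ann}(G_m)$ for all large $m$. This direction is routine.

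For the reverse implication I would prove the contrapositive: if $G_m\not\to G$, I produce a nonzero $\gamma$ lying in $\mathrm{Ann}(G_m)$ for infinitely many $m$. Using compactness of $\Sub_G$ I extract a subsequence $G_{m_k}\to H$ with $H\neq G$ a proper closed subgroup. Since $G/H$ is a nontrivial compact abelian group, Pontryagin duality supplies a nonzero $\gamma\in\mathrm{Ann}(H)$, i.e.\ $\gamma|_H=1$. The crux is then the claim that $\gamma|_{G_{m_k}}=1$ for all large $k$, so that $\gamma\in\mathrm{Ann}(G_{m_k})$ eventually, which negates the annihilator condition and finishes the contrapositive.

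This claim is the main obstacle, because a character can be only \emph{slightly} nontrivial on $G_{m_k}$: its image $\gamma(G_{m_k})$ may be a large finite cyclic group whose nontrivial elements all approach $1$, so that naively passing to the limit yields only $\gamma(g)=1$ with no contradiction. I would resolve this by exploiting the structure of compact subgroups of $\mathbb{T}$: every nontrivial compact subgroup of $\mathbb{T}$ (namely $\mathbb{T}$ itself, or the cyclic group $C_n$ with $n\geq 2$) contains an element $\zeta$ in the closed left half-plane, hence with $|\zeta-1|\geq\sqrt{2}$; for $C_n$ this holds because the angle interval $[\pi/2,3\pi/2]$ meets the $n$-th roots of unity once $n\geq 2$. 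Assuming $\gamma|_{G_{m_k}}\neq 1$ infinitely often, $\gamma(G_{m_k})$ is such a nontrivial compact subgroup, so I may choose $g_k\in G_{m_k}$ with $|\gamma(g_k)-1|\geq\sqrt{2}$; passing to a convergent subsequence $g_k\to g$, \Cref{conv}(I) gives $g\in H$, whence $\gamma(g)=1$, yet $|\gamma(g)-1|=\lim_k|\gamma(g_k)-1|\geq\sqrt{2}$, a contradiction. This establishes the claim and completes the proof.
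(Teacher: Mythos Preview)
Your argument is correct. The paper does not supply its own proof of this lemma: it is quoted verbatim as Lemma~4.7 of Berend~\cite{Be} and used as a black box, so there is no ``paper's proof'' to compare against. Your two directions are the standard ones, and the key step in the reverse implication---that a nontrivial closed subgroup of $\mathbb{T}$ always contains a point at distance at least $\sqrt{2}$ from $1$---is exactly what prevents the images $\gamma(G_{m_k})$ from being nontrivial yet uniformly close to $1$.

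One small remark: you invoke \Cref{conv}, which in this paper is stated only for connected Lie groups, whereas the present lemma concerns an arbitrary compact abelian metric group. The Chabauty convergence criterion you use holds in that generality (indeed for any locally compact second countable group; see e.g.\ \cite{BP}, Proposition~E.1.2), so the argument goes through unchanged, but strictly speaking you should cite the general version rather than \Cref{conv}.
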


The following is a direct consequence of \Cref{Berend1} (\cite{Be}, Lemma 4.7).

\begin{lem}\label{Berend2}
Let $\mT^n$ be the $n$-torus and let $\{G_m\}$ be a sequence of closed subgroups in $\mT^n$. Then $\{G_m\}$ does not converges to 
$\mT^n$ in $\spt$ if and only if there exists a character $\gamma$ of $\mT^n$ such that $\gamma \in \mathrm{Ann}(G_m)$ for infinitely many $m$.
\end{lem}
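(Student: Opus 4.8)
The plan is to obtain \Cref{Berend2} as the direct logical contrapositive of Berend's criterion (\Cref{Berend1}) applied to the compact abelian metric group $G=\mT^n$, whose dual group is $\Gamma=\Z^n$. The only preliminary point to record is that convergence to $\mT^n$ is governed by the Hausdorff metric: since $\mT^n$ is compact, the Chabauty topology on $\Sub_{\mT^n}$ is induced by the Hausdorff metric, as already noted, and $\spt$ carries the subspace topology inherited from $\Sub_{\mT^n}$ with the full group $\mT^n$ itself lying in $\spt$ (it is the $k=n$ member in \Cref{subptn}). Thus, for a sequence $\{G_m\}\subset\spt$, convergence to $\mT^n$ in $\spt$ coincides with convergence to $\mT^n$ in $\Sub_{\mT^n}$, and \Cref{Berend1} applies verbatim to the convergence $G_m\to\mT^n$.

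By \Cref{Berend1}, $G_m\to\mT^n$ holds if and only if, for every nonzero $\gamma\in\Gamma$, one has $\gamma\notin\mathrm{Ann}(G_m)$ for all sufficiently large $m$; equivalently, for each such $\gamma$ there is an index $M_\gamma$ with $\gamma\notin\mathrm{Ann}(G_m)$ whenever $m\geq M_\gamma$. I would then negate this statement. Its failure reads: there exists a nonzero $\gamma\in\Gamma$ for which no such $M_\gamma$ exists, i.e.\ for every $M$ there is some $m\geq M$ with $\gamma\in\mathrm{Ann}(G_m)$. This last condition is precisely the assertion that $\gamma\in\mathrm{Ann}(G_m)$ for infinitely many $m$. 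Identifying the nonzero elements of $\Gamma=\Z^n$ with the nontrivial characters of $\mT^n$ then yields exactly the stated equivalence: $\{G_m\}$ does not converge to $\mT^n$ if and only if some character $\gamma$ of $\mT^n$ lies in $\mathrm{Ann}(G_m)$ for infinitely many $m$.

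The argument is essentially a careful interchange of quantifiers, so the only place where genuine care is needed is the passage, under negation, from ``$\gamma\notin\mathrm{Ann}(G_m)$ for all sufficiently large $m$'' to ``$\gamma\in\mathrm{Ann}(G_m)$ for infinitely many $m$''. This is the standard equivalence between the negation of an \emph{eventual} property and the corresponding \emph{frequent} property, and it is the single step that must be written out cleanly; no structural input about $\spt$ beyond the reduction to the Hausdorff metric described above is required.
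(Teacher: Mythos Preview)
Your proposal is correct and matches the paper's approach: the paper simply states that \Cref{Berend2} is a direct consequence of \Cref{Berend1}, and your argument spells out exactly that contrapositive, with the clean handling of the eventual/frequent negation being the only nontrivial step. The extra remarks you include about the Hausdorff metric inducing the Chabauty topology on $\Sub_{\mT^n}$ and about $\mT^n\in\spt$ are appropriate clarifications that the paper leaves implicit.
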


We know that $\{e\}$ is isolated in $\spt$. We now show that any proper subtorus is isolated 
in the set of subtori of same dimension, or more generally, in the space of subtori of same or higher dimension. For $1\leq k<n$, let $\mfS_k$ 
be the space of all subtori of dimension $k$ in $\mT^n$ and let $\mfH_k=\cup_{m=k}^{n-1}\mfS_m\cup \{\mT^n\}$. Then 
$\mfH_k\subset\mfH_{k-1}$ for $2\leq k< n$ and 
$$
\spt=\mfH_1\cup \{\{e\}\}=\cup_{k=1}^{n-1}\mfS_k\cup\{\mT^n\}\cup \{e\}.$$ 
It is easy to see that all $\mfS_k$ and $\mfH_k$ are $T$ invariant for every $T\in\GL(n,\Z)$,
for $1\leq k\leq n-1$. As the set of $k$-dimensional subspaces in $\R^n$ is closed in $\Sub_{\R^n}$, we get 
that each $\mfH_k$ is closed in $\spt$.

\begin{lem} \label{isol} 
Let the notation be as above. For $n\geq 2$, $\mfH_k$ is closed  (compact) in $\spt$. 
Moreover, every $H\in \mfS_k$ is isolated in $\mfH_k$.
 \end{lem}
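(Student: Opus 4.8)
The plan is to reduce both assertions to a single containment statement: \emph{if a sequence of subtori $\{S_j\}$ in $\mfH_k$ converges to $S$ in $\spt$, then $S_j\subseteq S$ for all large $j$}. Granting this, the points of $\spt$ are the subtori together with $\{e\}$ (\Cref{subptn}), and since $\{e\}$ is isolated and each $S_j$ is nontrivial (having $\dim S_j\geq k\geq 1$), the limit must be $S=\mT^m$ for some $m\geq 1$. The containment then gives $\dim S_j\leq\dim S$, so a limit of subtori of dimension $\geq k$ again has dimension $\geq k$; hence $\mfH_k$ is closed in the compact space $\spt$, and therefore compact. For the isolation statement one takes $S=H\in\mfS_k$ with $\dim H=k$: the containment yields $S_j\subseteq H$ eventually, and $\dim S_j\geq k=\dim H$ together with connectedness forces $S_j=H$. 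Thus no sequence in $\mfH_k\setminus\{H\}$ can converge to $H$, so $H$ is isolated in $\mfH_k$.

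To establish the containment I would use that, since $\mT^n$ is compact, the Chabauty topology on $\Sub_{\mT^n}$ is the Hausdorff-metric topology; hence $S_j\to S$ gives $S_j\subseteq S\cdot B_\epsilon$ for every $\epsilon>0$ and all large $j$, where $B_\epsilon$ is the $\epsilon$-ball about $e$. Writing $q\colon\mT^n\to\mT^n/S$ for the quotient homomorphism and noting that $S$ is a subtorus, so $\mT^n/S$ is again a torus, one has $q(S_j)\subseteq q(B_\epsilon)$. The decisive point is the \emph{no-small-subgroups} property: there is a fixed neighbourhood $U$ of the identity in $\mT^n/S$ containing no nontrivial subgroup, and by continuity of $q$ one may choose $\epsilon$ so small that $q(B_\epsilon)\subseteq U$. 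Then $q(S_j)$ is a subgroup contained in $U$, hence trivial, i.e.\ $S_j\subseteq\ker q=S$, as required.

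The main obstacle, and the reason the statement is genuinely arithmetic rather than a soft consequence of Grassmannian geometry, is exactly this no-small-subgroups step. I would verify it concretely for $\mT^\ell=\R^\ell/\Z^\ell$: taking $U$ to be the image of $(-1/4,1/4)^\ell$, any nontrivial $g$ has a coordinate $g_i\neq 0$, and the integer multiples of $g_i$ modulo $1$ meet the interval $[1/4,3/4]$ (their spacing is at most $1/2$ when $g_i$ is rational, and they are dense when $g_i$ is irrational); so some multiple of $g$ leaves $U$, and no nontrivial subgroup fits inside $U$. This is what rules out nearby subspaces producing nearby subtori: although the rational $k$-planes are dense in the real Grassmannian, distinct $k$-subtori of $\mT^n$ are separated in the Hausdorff metric, and that separation is precisely what the quotient-and-NSS argument captures.

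Finally I would note that this single argument simultaneously recovers the closedness of $\mfH_k$ already indicated before the lemma via closedness of the space of $k$-planes in $\Sub_{\R^n}$: the containment $S_j\subseteq S$ forces $\dim S_j\leq\dim S$ along convergent sequences of subtori, so limits of members of $\mfH_k$ remain in $\mfH_k$. Beyond checking the elementary modular estimate underlying the no-small-subgroups property, I expect no further difficulty, since \Cref{subptn} already pins down the points of $\spt$ and the metrisability of $\Sub_{\mT^n}$ is in hand.
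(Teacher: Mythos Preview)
Your argument is correct and takes a genuinely different route from the paper. The paper handles closedness of $\mfH_k$ by lifting to $k$-dimensional subspaces in $\Sub_{\R^n}$, and proves isolation via Berend's character-theoretic lemma (\Cref{Berend1}): if $H_m\to H\neq\mT^n$ then some nonzero character $\gamma$ annihilates infinitely many $H_m$, so those $H_m$ sit inside the fixed $(n-1)$-torus $(\ker\gamma)^0$; for $k=n-1$ this forces $H_m=(\ker\gamma)^0=H$, and for smaller $k$ one inducts on the dimension of the ambient torus. You instead prove a single containment statement---Hausdorff convergence $S_j\to S$ of subtori forces $S_j\subseteq S$ eventually---by passing to the quotient $\mT^n/S$ and invoking the no-small-subgroups property, and then read off both assertions at once. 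Your approach is more direct and self-contained (no induction, no appeal to Berend's lemma), while the paper's route foregrounds the Pontryagin-duality viewpoint that fits the abelian setting and the lemmas it has already set up.
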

 
\begin{proof} Note that since any subtorus is divisible and $\mT^n$ is compact, the limit of a sequence of subtori is a subtorus or 
$\mT^n$ (see also \Cref{subptn}). Let $\pi:\R^n\to\mT^n$ be the natural projection with $\ker\pi=\Z^n$.  Then every 
$H\in \mathfrak{S}_k$ corresponds a $k$-dimensional vector subspace $V$ of $\R^n$ such that $\pi(V)=H$. Since the set of all 
$k$-dimensional subspace of $\R^n$ is closed in $\Sub_{\R^n}$, we get that if $H_m\to H$ in $\spt$ for $\{H_m\}\subset \mathfrak{S}_k$, 
then $\dim(H)\geq k$. Hence $H\in \mfH_k$. In particular,  $\mfH_k=\cup_{l=k}^{n-1}\mfS_l\cup\{\mT^n\}$ is 
closed. Thus the first assertion holds.
 
 \medskip
\noindent{\bf Step 1:}  Now we prove the second assertion. We first consider $k=n-1$. Let $H\in \mfS_{n-1}$. We want to show that 
 $H$ is isolated in  $\mfH_{n-1}$. Suppose $H_m\in \mfH_{n-1}$, $m\in\N$, is such that $H_m\to H$, then we may assume that 
 $H_m\in \mfS_{n-1}$ for all $m\in\N$. If possible, suppose $H_m\neq H$ for infinitely many $m$. Passing to a subsequence if necessary, 
 we may assume that $H_m\neq H$ for all $m$. By Lemma 4.7 of \cite{Be} (see also Lemmas \ref{Berend1} 
 or \ref{Berend2}), there exists nonzero character $\psi$ on $\mT^n$ such that $\psi(H_m)=0$, i.e.\ 
 $H_m\subseteq (\ker\psi)^0$, for infinitely many $m$. Since $\dim(H_m)=n-1$, we get that $H_m=(\ker\psi)^0$ for infinitely many $m$, 
 and hence $H=(\ker\psi)^0$. Therefore, $H_m=H$ for infinitely many $m$, which leads to a contradiction. Thus $H_m=H$ for all large $m$. 
 Therefore, each $H\in \mathfrak{S}_{n-1}$ is isolated in $\mfS_{n-1}$, and hence in $\mfH_{n-1}$. In particular, the second assertion holds for $n=2$. 
  
  \medskip
  \noindent{\bf Step 2:} Now suppose $n\geq 3$. Suppose the second assertion holds for every torus with dimension $n-1$. Now we prove the 
  assertion for $\mT^n$. Suppose $1\leq k \leq n-1$. If $k=n-1$, then the assertion follows from Step 1. Now suppose $1\leq k\leq n-2$. Let $H\in\mfS_k$. 
  Suppose $H_m\to H$ for some $\{H_m\}\subset \mfH_k$. 
   If possible, suppose $H_m\ne H$ for infinitely many $m$. Passing to a subsequence if necessary, we may 
   assume that that $H_m\ne H$ for all $m$. Since $H\ne\mT^n$, arguing as in Step 1 using Lemma 4.7 of \cite{Be}, we get that 
  there exists a character $\gamma$ of $\mT^n$ such that $\gamma(H_m)=0$, i.e.\ $H_m\subset (\ker\gamma)^0=K$ for infinitely 
  many $m$, and hence it follows that $H\subset K$. Since $\dim(K)=n-1$, by induction, $H$ is isolated in the set (say) $\mfH_k(K)$ of all 
  subtori of $K$ with dimension greater than or equal to $k$, and hence $H_m=H$ for infinitely many $m$, which leads to a contradiction. 
  Therefore, $H$ is isolated in $\mfH_k$, and the assertion holds by induction for all $n$. 
  \end{proof}

\section{Distal actions of automorphisms of connected Lie groups $G$ on Sub$^p_G$} 

In this section, we first prove \Cref{distal-t} which extends Theorem 1.1\,(2) of \cite{CSh} to compact 
connected abelian Lie groups. Using Theorem \ref{distal-t} we generalise Theorems 1.1\,(2), 1.2, 1.4 and 1.5 of \cite{CSh}.

\begin{thm}\label{distal-t} Let $G$ be an $n$-torus for some $n\in\N$, and let $T$ be an automorphism on $G$. Then $T$ acts distally on 
$\spg$ if and only if $T^m=\Id$, the identity map, for some $m\in\N$.
\end{thm}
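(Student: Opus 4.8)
The plan is to prove the two implications separately, treating the forward direction (distality $\Rightarrow$ finite order) by its contrapositive. The easy direction is quick: if $T^m=\Id$, then $\langle T\rangle$ is a finite, hence compact, subgroup of $\Aut(\mT^n)=\GL(n,\Z)$ (recall the compact-open topology here is discrete, so finite order is the same as generating a relatively compact subgroup). The induced map on $\spt$ then generates a finite group of homeomorphisms, and for any $H_1\neq H_2$ the double orbit $\{(T^kH_1,T^kH_2)\mid k\in\Z\}$ is a finite set which, since each $T^k$ is a bijection, never meets the diagonal; being finite it is closed, so its closure misses the diagonal and $T$ is distal.

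For the hard direction I would assume $T$ has infinite order and exhibit a proximal pair, so that $T$ is not distal. The device is to pass to the dual (character) action. Writing $\widehat{\mT^n}=\Z^n$ and letting $T^\ast\in\GL(n,\Z)$ be the transpose, i.e.\ the map $\chi\mapsto\chi\circ T$ on characters, one has that $T^\ast$ also has infinite order and that $\mathrm{Ann}(T^kH)=(T^\ast)^{-k}\mathrm{Ann}(H)$ for every closed subgroup $H$. I would then restrict attention to codimension-one subtori: for such an $H$ the annihilator is a rank-one primitive lattice $\Z\chi$, and convergence to $\mT^n$ becomes transparent. Concretely, using Berend's criterion (\Cref{Berend2}), for an $(n-1)$-subtorus $H$ with $\mathrm{Ann}(H)=\Z\chi$ one shows that $T^kH\to\mT^n$ in $\spt$ as $k\to\infty$ if and only if $\chi$ has infinite $T^\ast$-orbit: indeed $\mathrm{Ann}(T^kH)=\Z(T^\ast)^{-k}\chi$, and since unimodular images of primitive vectors are primitive, a fixed nonzero $\gamma$ lies in $\mathrm{Ann}(T^kH)$ for infinitely many $k$ precisely when $(T^\ast)^{-k}\chi=\pm\gamma'$ (the primitive part of $\gamma$) infinitely often, which forces $\chi$ to be $T^\ast$-periodic.

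It then remains to produce two \emph{distinct} codimension-one subtori $H_1\neq H_2$ with $T^kH_i\to\mT^n$; the pair $(T^kH_1,T^kH_2)\to(\mT^n,\mT^n)$ lands on the diagonal and shows $T$ is not distal. By the reduction above this amounts to finding two non-proportional primitive covectors $\chi_1,\chi_2$, each with infinite $T^\ast$-orbit. Here infinite order is exactly what is needed: if every coordinate covector $e_i$ were $T^\ast$-periodic, a common power $(T^\ast)^P$ would fix each $e_i$, forcing $(T^\ast)^P=\Id$ and contradicting infinite order; so some $e_i$ has infinite orbit. A short argument shows the orbit $\{(T^\ast)^ke_i\}$ consists of infinitely many pairwise non-proportional primitive covectors (two being proportional would again force periodicity), so taking $\chi_1=e_i$ and $\chi_2=T^\ast e_i$ and setting $H_i=(\ker\chi_i)^0$ completes the construction. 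The case $n=1$ is trivial, since every element of $\Aut(\mT)$ has order dividing $2$.

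The main obstacle, and the point I would emphasize, is that Chabauty convergence of subtori is strictly finer than convergence of the corresponding subspaces in the Grassmannian: a sequence of rational lines can converge to a rational line in the Grassmannian yet converge to all of $\mT^n$ in $\spt$ (as already the parabolic case in $\mT^2$ shows). Thus the naive idea of using geometric attraction of generic subspaces toward an expanding eigenspace is \emph{not} sufficient to read off the limit in $\spt$. What makes the convergence both correct and easy to verify is the arithmetic input of Berend's lemma, packaged through the dual lattice action and the rank-one annihilators of codimension-one subtori; getting this packaging right, rather than the final counting of orbits, is the delicate step.
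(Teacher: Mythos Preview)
Your proof is correct and takes a genuinely different route from the paper's. The paper argues by induction on $n$: assuming $T$ acts distally, it uses compactness of $\mfH_{n-1}$ together with the isolation of $(n-1)$-subtori (\Cref{isol}) to force some power $T^l$ to stabilise an $(n-1)$-subtorus $H$; the induction hypothesis applied to $T^l|_H$, plus the fact that $G/H\cong\mT$, then makes all eigenvalues of a further power of $T$ equal to $1$, and an external result (Theorem~1.3 of \cite{CSh}) finishes off the unipotent case. Your argument is instead a direct contrapositive: passing to the dual action on $\Z^n$, you identify $(n-1)$-subtori with primitive covectors and read the Chabauty limit $T^kH\to\mT^n$ straight off Berend's criterion (\Cref{Berend1}, \Cref{Berend2}) whenever the annihilating covector has infinite $T^\ast$-orbit; producing two such covectors from a single infinite orbit gives the proximal pair. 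This avoids both the induction and the appeal to \cite{CSh}, so it is more self-contained. On the other hand, the paper's route invests in the isolation lemma (\Cref{isol}), which is reused in the expansivity arguments of \S\,4, so that investment pays off elsewhere; your argument is tailored to distality and does not yield that structural by-product.
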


\begin{proof}
One way assertion is obvious. We prove the converse by the induction on the dimension $n$ of the torus; i.e.\ we assume that $T$ acts distally 
on $\spg$ and 
show that $T^m=\Id$ for some $m\in\N$. If $G=\mT^1$, the one-dimensional torus, then $\Aut(G)$ has only two elements, hence $T^2 =\Id$. 
Thus the assertion holds trivially for $n=1$. Now suppose the assertion holds for any torus $G$ with $\dim G\leq n-1$. where $n\ge 2$. 
Let $G=\mT^n$. Let $H\in\mfS_{n-1}$. Then $H\in\spg$ by \Cref{subptn} (see also Lemma 2.3 in \cite{CSh}). As $\spg$ is 
compact, there exists a strictly increasing sequence $\{m_k\}$ in $\N$, such that $T^{m_k}(H)\to L$ for some $L\in\spg$.  As $H\in\mfS_{n-1}\subset\mfH_{n-1}$, 
and $\mfH_{n-1}$ is compact and $T$-invariant, we have that $L\in\mfH_{n-1}$. Since $T$ acts distally on $\spg$ 
and $T(G)=G$ and $H\ne G$, it follows that  $L\ne G$. Therefore, $L\in\mfS_{n-1}$. By \Cref{isol}, $L$ is isolated in $\mfH_{n-1}$, and hence  
$T^{m_k}(H)=L$ for all large $k$. Thus for some $k\in\N$, $T^{m_k}(H)=T^{m_{k+i}}(H)$ for all $i\in\N$. For $l=m_{k+1}-m_k$, we have that $H$ is $T^l$-invariant. 
Since $T^l$ acts distally on $\spg$, and also on $\spp_H$, and $\dim H=n-1$, by the induction hypothesis we get that $(T|_H)^{lr}=\Id$ for 
some $r\in\N$. Moreover, since $G/H$ is isomorphic to  $\mT^1$, we have that $\T^2=\Id$, where $\T$ is the automorphisms of $G/H$ induced by $T$.
 It follows that for $m=2lr$, all the eigenvalues of $T^m$ are equal to 1, and hence $T^m$ is unipotent. Since $T^m$ acts distally on $\spg$, 
by Theorem 1.3 of \cite{CSh}, $T^m = \Id$. Thus the assertion holds for all $n$ by induction.
\end{proof}

We say that a subgroup $\Hc$ of homeomorphisms of $X$ acts distally on a topological space $X$ if for any $x,y\in X$ such that $x\ne y$, 
$\ol{\{(T(x),T(y))\mid T\in \Hc\}}\cap\{(d,d)\mid d\in X\}=\emptyset$. Note that if $\Hc$ acts distally on $X$, then every element of $\Hc$
acts distally on $X$. The following corollary will be useful in the proof of \Cref{subgp-action}.

\begin{cor} \label{subgp-t} Let $\Hc$ be a subgroup of $\Aut(\mT^n)$ for some $n\in\N$. Then the following are equivalent: 
\begin{enumerate} 
\item Every element of $\Hc$ acts distally on $\spt$. 
\item $\Hc$ acts distally on $\spt$. 
\item $\Hc$ is a finite group.
\end{enumerate}
\end{cor}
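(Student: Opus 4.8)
The plan is to prove the three implications $(2)\Rightarrow(1)\Rightarrow(3)\Rightarrow(2)$, exploiting \Cref{distal-t} to reduce everything to a statement about finite order. The implication $(2)\Rightarrow(1)$ is immediate from the observation recorded just before the corollary: if a subgroup $\Hc$ of homeomorphisms acts distally on $X$, then in particular every single element $T\in\Hc$ acts distally, since the double-orbit closure under $\langle T\rangle$ is contained in the double-orbit closure under $\Hc$. So this direction requires no real work.

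For $(1)\Rightarrow(3)$, I would argue as follows. Suppose every element of $\Hc$ acts distally on $\spt$. By \Cref{distal-t}, each $T\in\Hc$ satisfies $T^m=\Id$ for some $m\in\N$, i.e.\ $\Hc$ is a torsion subgroup of $\Aut(\mT^n)=\GL(n,\Z)$. The key fact to invoke is that $\GL(n,\Z)$ is a finitely generated linear group, and by a classical theorem (Burnside--Schur, or more directly the fact that finitely generated torsion subgroups of $\GL(n,\Z)$ are finite, since $\GL(n,\Z)$ is virtually torsion-free via the congruence subgroup $\Gamma(3)$ which is torsion-free) every torsion subgroup of $\GL(n,\Z)$ is finite. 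Concretely, I would cite that the principal congruence subgroup of level $\geq 3$ in $\GL(n,\Z)$ is torsion-free and of finite index; since $\Hc$ is torsion, it intersects this congruence subgroup trivially, and therefore injects into the finite quotient $\GL(n,\Z/3\Z)$ (or $\GL(n,\Z)/\Gamma(3)$), forcing $\Hc$ to be finite. This is the step I expect to be the main obstacle, not because it is deep but because it requires importing the right structural fact about $\GL(n,\Z)$ rather than anything purely dynamical; one must be careful that torsion alone (without a priori finite generation of $\Hc$) still gives finiteness, which the congruence-subgroup argument does supply since the bound $|\Hc|\leq|\GL(n,\Z/3\Z)|$ is uniform.

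Finally, for $(3)\Rightarrow(2)$, suppose $\Hc$ is finite. Since $\Aut(\mT^n)=\GL(n,\Z)$ acts on the compact metric space $\spt$ by homeomorphisms, a finite group of homeomorphisms automatically acts distally: given distinct $x,y\in\spt$, the double orbit $\{(T(x),T(y))\mid T\in\Hc\}$ is a finite set, hence already closed, and it misses the diagonal because $T(x)\neq T(y)$ for every $T\in\Hc$ (as each $T$ is injective). Thus the closure of the double orbit equals the finite double orbit itself and does not meet the diagonal, giving distality of the $\Hc$-action. This completes the cycle of implications and establishes the equivalence of $(1)$, $(2)$ and $(3)$.
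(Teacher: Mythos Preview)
Your proof is correct and follows the same overall cycle as the paper, with the implications $(2)\Rightarrow(1)$ and $(3)\Rightarrow(2)$ handled identically (the paper simply declares them obvious). The only genuine difference is in the step $(1)\Rightarrow(3)$: after using \Cref{distal-t} to conclude that $\Hc$ is a torsion subgroup of $\GL(n,\Z)$, the paper observes that $\Hc$ is then a closed subgroup of $\GL(n,\C)$ consisting of finite-order elements, invokes a result of Fresnel--van der Put to deduce that $\Hc$ is compact, and combines this with discreteness of $\GL(n,\Z)$ to get finiteness. Your route via the torsion-freeness of the principal congruence subgroup $\Gamma(3)$ (Minkowski) is more self-contained and classical, yielding an explicit uniform bound $|\Hc|\leq |\GL(n,\Z/3\Z)|$ without passing through a compactness criterion in $\GL(n,\C)$; the paper's approach, on the other hand, fits naturally with the compact-open topology viewpoint used elsewhere in the article. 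Both arguments are standard and either would be acceptable here.
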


\begin{proof} 
The statements $(3)\implies (2)\implies (1)$ are obvious. Now we show that $(1)\implies (3)$. This hold for $n=1$ as
$\Aut(\mT)$ is a group of order $2$. Now suppose $n\geq 2$. Suppose (1) holds. By Theorem \ref{distal-t}, every element of 
$\Hc$ has finite order. Note that $\Aut(\mT^n)=\GL(n,\Z)$ is a discrete subgroup of $\GL(n,\R)$ as well of $\GL(n,\C)$. Then 
$\Hc$ is closed in $\GL(n,\C)$ and by Theorem 1.1 of \cite{FP}, $\Hc$ is compact. As $\Hc\subset\GL(n,\Z)$ is discrete, we 
get that $\Hc$ is finite. Thus (3) holds, and hence $(1-3)$ are equivalent. 
\end{proof} 
 
 In \cite{CSh}, for $T\in\Aut(G)$ it is shown that $T$ acts distally on $\spg$ implies that it acts distally on $G$, where $G$ is 
a connected non-abelian Lie group without any (nontrivial) central torus; more generally, if $G$ is not a vector group and $T$ acts distally 
on the maximal central torus. The following corollary generalises Theorem 1.2 of \cite{CSh}. 
 
\begin{cor}\label{distal-g}
Let $G$ be a connected Lie group which is not a vector group and let $T \in \Aut(G)$. If $T$ acts distally on $\spg$, then $T$ acts distally on $G$.
\end{cor}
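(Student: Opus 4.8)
The plan is to reduce to the situation already treated in \cite{CSh}, namely Theorem 1.2 there, which asserts that when $G$ is not a vector group and $T$ acts distally on the maximal central torus of $G$, then $T$ acting distally on $\spg$ forces $T$ to be distal on $G$. Thus the only new ingredient needed here is to promote the hypothesis ``$T$ acts distally on $\spg$'' to the extra conclusion ``$T$ acts distally on the maximal central torus,'' and this is exactly where \Cref{distal-t} enters.

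First I would let $K$ denote the maximal central torus of $G$, which is characteristic in $G$; hence $T(K)=K$ and $T$ restricts to an automorphism $T|_K\in\Aut(K)$. The key observation is that $\spp_K$ sits inside $\spg$ as a closed $T$-invariant subspace. Indeed, every closed one-parameter subgroup of $K$ is a closed one-parameter subgroup of $G$, since its image is closed in $K$ and $K$ is closed in $G$; moreover $\Sub_K$ embeds as a closed subspace of $\Sub_G$, so taking closures in the Chabauty topology identifies $\spp_K$ with a closed subset of $\spg$. For $H\in\spp_K$ we have $H\subseteq K$ and $T(H)=T|_K(H)$, so the $T$-action on this subspace agrees with the $T|_K$-action on $\spp_K$.

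Next I would invoke that distality passes to closed invariant subspaces: since $T$ acts distally on $\spg$, the automorphism $T|_K$ acts distally on $\spp_K$. If $K$ is nontrivial, then $K\cong\mT^k$ for some $k\geq 1$, and \Cref{distal-t} applied to the torus $K$ yields $(T|_K)^m=\Id$ for some $m\in\N$. A periodic automorphism is distal, as each nonidentity element has a finite orbit consisting of nonidentity elements whose closure omits $e$; hence $T|_K$ acts distally on $K$. If $K$ is trivial this is automatic. In either case $T$ acts distally on the maximal central torus of $G$, and since $G$ is not a vector group, Theorem 1.2 of \cite{CSh} then gives that $T$ acts distally on $G$, completing the argument.

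I expect the only delicate point to be the identification of $\spp_K$ as a closed $T$-invariant subspace of $\spg$ on which the ambient $T$-action coincides with the $T|_K$-action; once this bookkeeping is settled, \Cref{distal-t} supplies the missing distality on the central torus and Theorem 1.2 of \cite{CSh} does the rest. There is no deeper obstacle, since the substantive input relating distality on $\spg$ to distality on $G$ away from the central torus is already available from \cite{CSh}.
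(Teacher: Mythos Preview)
Your proposal is correct and follows essentially the same route as the paper's own proof: both identify the maximal central torus $M$ (your $K$) as a characteristic subgroup, observe that $T|_M$ acts distally on $\spp_M$, invoke \Cref{distal-t} to conclude that $T|_M$ has finite order and hence is distal on $M$, and then finish with Theorem~1.2 of \cite{CSh}. The only cosmetic difference is that the paper handles the trivial-$M$ case by citing Theorem~1.1\,(2) of \cite{CSh} directly rather than absorbing it into Theorem~1.2, and it spends less space justifying the embedding $\spp_M\hookrightarrow\spg$, which it treats as evident.
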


\begin{proof} Suppose $T$ acts distally on $\spg$ for a connected Lie group $G$ which is not a vector group. 
Let $M$ be the largest compact connected central subgroup of $G$. If $M$ is trivial, then the assertion follows from Theorem 1.1\,(2) of \cite{CSh}. 
Now suppose $M$ is nontrivial. Then $M \cong \mT^n$ for some $n\in\N$. Note that $M$ is invariant under $T$ and $T|_M$ acts distally on 
$\spp_M$. By Theorem \ref{distal-t} we get that some power of $T|_M$ is the identity map. In particular, $T$ acts distally on $M$. As $T$ acts distally
on $\spg$, by Theorem 1.2 of \cite{CSh},  $T$ acts distally on $G$. 
\end{proof}

The following theorem generalises Theorems 1.1\,(2) and 1.4 of \cite{CSh}, and also a part of Theorem 4.1 of \cite{ShY3}.

\begin{thm} \label{gen-Lie}
Let $G$ be a connected Lie group and let $T \in \Aut(G)$. Consider the following statements.
\begin{enumerate}
\item $T$ acts distally on $\spg$.
\item $T$ acts distally on $\ol{\Sub^c_G}$. 
\item $T$ acts distally on $\Sub^a_G$. 
\item $T$ acts distally on $\Sub_G$. 
\item $T$ is contained in a compact subgroup of $\Aut(G)$. 
\end{enumerate}
Then $(2-5)$ are equivalent. If $G$ is not a vector group, then $(1-5)$ are equivalent. 
\end{thm}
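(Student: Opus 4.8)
The plan is to establish the chain of equivalences by combining results already available in the literature (namely \cite{CSh} and \cite{ShY3}) with the new \Cref{distal-t} and \Cref{distal-g}. The key organizing principle is that statements $(2)$--$(5)$ form a block whose equivalence is essentially known for general connected Lie groups, while the new contribution is to bolt $(1)$ onto this block using the torus result. So I would first prove the equivalence of $(2)$--$(5)$ for arbitrary connected $G$, and then, under the hypothesis that $G$ is not a vector group, close the loop by showing $(1)\Leftrightarrow$ the rest.

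For the equivalence of $(2)$--$(5)$, the natural route is $(5)\Rightarrow(4)\Rightarrow(3)\Rightarrow(2)$ via a monotonicity/restriction argument, followed by $(2)\Rightarrow(5)$ to close the cycle. The implication $(5)\Rightarrow(4)$ is standard: if $T$ lies in a compact subgroup $K$ of $\Aut(G)$, then the induced $K$-action on the compact space $\Sub_G$ is equicontinuous, hence distal, so in particular $T$ acts distally on $\Sub_G$. The downward implications $(4)\Rightarrow(3)\Rightarrow(2)$ follow because $\Sub^a_G$ and $\ol{\Sub^c_G}$ are closed $T$-invariant subspaces of $\Sub_G$, and distality is inherited by invariant subspaces. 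The one implication carrying real content is $(2)\Rightarrow(5)$, i.e.\ distality on $\ol{\Sub^c_G}$ forces $T$ into a compact subgroup of $\Aut(G)$; here I expect to invoke the relevant theorem from \cite{ShY3} (Theorem 4.1 there), possibly combined with the observation that cyclic subgroups detect enough of the group structure to recover distality of $T$ on $G$ itself and then relative compactness in $\Aut(G)$.

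For the harder direction involving $(1)$, suppose $G$ is not a vector group. To show $(1)\Rightarrow(5)$, assume $T$ acts distally on $\spg$. By \Cref{distal-g}, $T$ then acts distally on $G$. The remaining task is to upgrade distality on $G$ to relative compactness of $T$ in $\Aut(G)$; for a connected Lie group that is not a vector group, this is precisely where the central torus obstruction that plagued earlier work is now removed by \Cref{distal-t}. Concretely, writing $M\cong\mT^d$ for the maximal central torus, distality on $\spg$ restricts to distality on $\spp_M$, so by \Cref{distal-t} some power of $T|_M$ is the identity; combined with distality on $G$ and the structure theory for $\Aut(G)$ of a connected Lie group, one concludes $T$ generates a relatively compact subgroup. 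For the reverse $(5)\Rightarrow(1)$, relative compactness gives equicontinuity of the $T$-action on the compact space $\spg$ (which is $T$-invariant), hence distality. This pins $(1)$ into the equivalence block.

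\textbf{The main obstacle} I anticipate is the step $(1)\Rightarrow(5)$ in the non-vector-group case, specifically assembling the pieces into relative compactness rather than merely distality. Distality on $G$ alone does not force $T$ into a compact subgroup of $\Aut(G)$ for a general Lie group (unipotent automorphisms are distal but need not be relatively compact); what rescues the argument is that the hypothesis of distality on \emph{$\spg$} is genuinely stronger and, via \Cref{distal-t}, kills any nontrivial unipotent behaviour on the central torus. The delicate point is to verify that once $T|_M$ has finite order and $T$ is distal on $G$, no residual unboundedness survives in the full automorphism group --- this likely requires appealing to the existing structural results of \cite{CSh} (Theorem 1.4) that already handle the central-torus-distal case, so that the role of \Cref{distal-t} is precisely to supply the hypothesis those theorems need. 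I would therefore be careful to cite the exact earlier theorem being bootstrapped, since the novelty of \Cref{gen-Lie} lies less in a new argument than in removing the central-torus restriction from the known equivalences.
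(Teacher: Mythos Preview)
Your approach is essentially the paper's, with the same key ingredients: the trivial chain $(5)\Rightarrow(4)\Rightarrow(3)\Rightarrow(2)\Rightarrow(1)$, \Cref{distal-t} applied to the maximal central torus $M$ to obtain finite order of $T|_M$, and then Theorem~1.4 of \cite{CSh} to conclude relative compactness. Two small points of divergence are worth flagging.

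First, your plan to prove $(2)\Rightarrow(5)$ \emph{uniformly} for all connected $G$ by invoking Theorem~4.1 of \cite{ShY3} does not quite go through as stated: that theorem uses the condition (NC) (that $\{e\}$ is not a limit of $\{T^n(C)\}$ for any discrete cyclic $C$), and the paper itself remarks that (NC) fails to characterise compactness when $G$ has a nontrivial central torus --- for instance every $T\in\GL(n,\Z)$ satisfies (NC) on $\mT^n$. The paper therefore splits into cases: for vector groups it does use Theorem~4.1 of \cite{ShY3} to get $(2)\Rightarrow(5)$, while for non-vector groups it proves $(1)\Rightarrow(5)$ directly (and then $(2)\Rightarrow(5)$ follows since $(2)\Rightarrow(1)$). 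Your overall argument still works once reorganised this way, but the uniform step you propose is not available.

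Second, the detour through \Cref{distal-g} is unnecessary: the paper goes straight from ``$T$ acts distally on $\spg$ and $T|_M$ has finite order (hence $T$ is distal on $M$)'' to Theorem~1.4 of \cite{CSh}, which already yields relative compactness without first establishing distality of $T$ on all of $G$.
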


\begin{proof}
Statements $(5) \implies (4) \implies (3) \implies (2) \implies (1)$ are trivial. Suppose $G$ is not a vector group. We need to prove that $(1) \implies (5)$. 
Suppose $(1)$ holds, i.e.\ $T$ acts distally on $\spg$. Let $M$ be he largest compact connected central subgroup of $G$. Then $M$ is characteristic 
in $G$, i.e.\ $T(M)=M$. Then (1) implies that $T$ acts distally on $\spp_M$. By Theorem \ref{distal-t}, for some $m\in\N$, $T^m$ acts trivially on $M$, 
in particular, $T$ acts distally on $M$. Now by Theorem 1.4 of \cite{CSh}, $T$ is contained in a compact subgroup of $\Aut(G)$. 
Thus $(5)$ holds, and $(1-5)$ are equivalent if $G$ is not a vector group. 

Now suppose $G$ is a vector group and suppose (2) holds. Then (2) implies that $T\in{\rm (NC)}$, i.e.\ the trivial subgroup $\{e\}$ is not a limit point 
of $\{T^n(C)\}_{n\in\Z}$ for any discrete closed cyclic subgroup $C$ of $G$. Then by Theorem 4.1 of \cite{ShY3}, $T$ is contained in a compact subgroup 
of $\Aut(G)$, i.e.\ (5) holds, and $(2-5)$ are equivalent if $G$ is a vector group. 
\end{proof}

 The following generalises a part of Theorem 4.1 of \cite{ShY3} and Theorem 1.5 of \cite{CSh} when $G$ is not a vector group. 
 Note that in case $G$ is a vector group, the equivalence of $(1)$ and $(2)$, as well as that of $(3-6)$ is shown in Theorem 1.5 of \cite{CSh}.
Note also that it is not possible to generalise Theorem 4.1 of \cite{ShY3} for all connected Lie groups $G$, for if $G$ is an $n$-torus, $n\geq 2$, then 
$\Aut(G)\cong\GL(n,\Z)$ and every $T\in\Aut(G)$ belongs to (NC). 

\begin{thm} \label{subgp-action} Let $G$ be a connected Lie group. Let $\Hc$ be a subgroup of $\Aut(G)$. Consider the following statements:
\begin{enumerate}
\item Every element of $\ol{\Hc}$ acts distally on $\spg$. 
\item $\Hc$ acts distally on $\spg$.
\item $\Hc$ acts distally on $\ol{\Sub^c_G}$. 
\item $\Hc$ acts distally on $\Sub^a_G$. 
\item $\Hc$ acts distally on $\Sub_G$. 
\item $\ol{\Hc}$ is a compact group.
\end{enumerate}
\noindent Then $(1)$ and $(2)$ are equivalent, and $(3-6)$ are equivalent. If $G$ is not a vector group, then $(1-6)$ are equivalent. 
\end{thm}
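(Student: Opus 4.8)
The plan is to prove the easy chain of implications first and then close the two cycles by reducing everything to the action of $\ol{\Hc}$ on the maximal central torus $M$ of $G$, which is characteristic and hence $\ol{\Hc}$-invariant. First I would record the trivial implications $(6)\Rightarrow(5)\Rightarrow(4)\Rightarrow(3)\Rightarrow(2)\Rightarrow(1)$. Here $(6)\Rightarrow(5)$ holds because a compact group of homeomorphisms of a compact metric space preserves an averaged metric and so acts equicontinuously, in particular distally; the implications $(5)\Rightarrow(4)\Rightarrow(3)\Rightarrow(2)$ are just restriction of a distal action to the closed invariant subspaces $\Sub^a_G\supseteq\ol{\Sub^c_G}\supseteq\spg$. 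For $(2)\Rightarrow(1)$ I would use that $\ol{\Hc}$ is a subgroup of $\Aut(G)$ and that, by continuity of the action, every $S\in\ol{\Hc}$ is a limit of elements of $\Hc$, so that $\{(S(x),S(y))\mid S\in\ol{\Hc}\}\subseteq\ol{\{(T(x),T(y))\mid T\in\Hc\}}$ for all $x,y\in\spg$; hence if $\Hc$ acts distally then so does $\ol{\Hc}$, and a fortiori so does every single element of $\ol{\Hc}$.

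The key structural step is the reduction to $M$. Since $M$ is characteristic, restriction gives a continuous homomorphism $\rho\colon\ol{\Hc}\to\Aut(M)\cong\GL(k,\Z)$, where $M\cong\mT^k$ (the case $k=0$, namely $G$ without nontrivial central torus, being covered directly by \cite{ShY3}). Because $\spp_M$ is a closed invariant subspace of $\spg$, each of the hypotheses $(1)$ and $(2)$ forces the corresponding distality of $\rho(\ol{\Hc})$ on $\spp_M$; by \Cref{distal-t} every element of $\rho(\ol{\Hc})$ then has finite order, and by \Cref{subgp-t} the group $\rho(\ol{\Hc})$ is finite. Consequently $\ol{\Hc}$ acts on $M$ through a finite quotient, and it suffices to analyse the finite-index closed subgroup $N=\{S\in\ol{\Hc}\mid S|_M=\Id\}$, which contains $\ol{\Hc}^{\,0}$.

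To close the equivalences I would argue separately for the two blocks. For $(3)$--$(6)$, valid for any connected Lie group, the hypotheses provide genuine \emph{group} distality on the large spaces $\ol{\Sub^c_G}$, $\Sub^a_G$ or $\Sub_G$; combining the finiteness of $\rho(\ol{\Hc})$ above with \cite{ShY3} (Theorem 4.1 and the ${\rm (NC)}$ condition) applied to $N$ yields $\ol{\Hc}$ compact, which gives $(3)\Rightarrow(6)$ and closes that cycle. For the remaining implications when $G$ is not a vector group, I would upgrade $(1)$ to $(6)$: by \Cref{distal-g} every element of $\ol{\Hc}$ acts distally on $G$, and by \Cref{gen-Lie} (the single-element equivalence $(1)\Leftrightarrow(5)$) each lies in a compact subgroup of $\Aut(G)$. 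Since a connected Lie group all of whose elements lie in compact subgroups has only relatively compact one-parameter subgroups and hence no closed subgroup isomorphic to $\R$, the identity component $\ol{\Hc}^{\,0}$ is compact, while the discrete component group $\ol{\Hc}/\ol{\Hc}^{\,0}$ is controlled by $\rho$ together with the argument of \Cref{subgp-t} (using \cite{FP}). This gives $\ol{\Hc}$ compact, hence $(1)\Rightarrow(6)\Rightarrow(2)$, so $(1)$--$(6)$ are all equivalent in the non-vector case; the equivalences $(1)\Leftrightarrow(2)$ and $(3)$--$(6)$ in the vector-group case are quoted from \cite{CSh} (Theorem 1.5).

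The main obstacle is precisely the passage, for $G$ not a vector group, from the element-wise hypothesis $(1)$ on the small space $\spg$ to compactness of the whole group $\ol{\Hc}$. Element-wise relative compactness of a subgroup of a Lie group does not in general imply relative compactness of the subgroup, and indeed on a vector group an automorphism can act distally on $\spg\cong\RP^{n-1}$ without generating a relatively compact group, so no purely formal argument can succeed. The resolution must exploit two features at once: that for non-vector $G$ distality on $\spg$ is already, at the level of individual elements, as strong as membership in a compact subgroup of $\Aut(G)$ (via \Cref{gen-Lie} and \Cref{distal-g}), and that the central-torus action is pinned to a finite group by \Cref{distal-t} and \Cref{subgp-t}. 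Checking carefully that these combine to force compactness of both $\ol{\Hc}^{\,0}$ and the component group of $\ol{\Hc}$ is where the real work lies.
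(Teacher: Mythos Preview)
Your reduction to the maximal central torus $M$ and the use of \Cref{subgp-t} to get finiteness of $\{T|_M : T\in\ol{\Hc}\}$ match the paper exactly, as does the chain $(6)\Rightarrow\cdots\Rightarrow(2)$ and the step $(2)\Rightarrow(1)$ via Ellis. The divergence, and the gap, is in how you close $(1)\Rightarrow(6)$ for non-vector $G$. The paper does this in one line: once the restriction to $M$ factors through a finite group, $\Hc$ acts distally on $M$, and then Theorem~1.5 of \cite{CSh}---which is precisely the present theorem \emph{with} the additional hypothesis that $\Hc$ acts distally on $M$---immediately gives $\ol{\Hc}$ compact. The entire content of \Cref{subgp-action} over \cite{CSh} is thus the torus input (\Cref{distal-t}, \Cref{subgp-t}); the passage from ``element-wise distal on $\spg$ and distal on $M$'' to ``$\ol{\Hc}$ compact'' is already done in \cite{CSh} and should simply be cited.

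You instead try to rebuild that passage from element-wise compactness (via \Cref{gen-Lie}) together with a component-group argument, and you rightly identify this as where the real work lies---but you do not carry it out, and the sketch as written does not close. In particular, ``controlling $\ol{\Hc}/\ol{\Hc}^{\,0}$ by $\rho$'' fails because $\ker\rho$ can itself have infinitely many components; finiteness of $\rho(\ol{\Hc})$ says nothing about $\pi_0(\ker\rho)$. Your separate route for $(3)\Rightarrow(6)$ through \cite{ShY3} Theorem~4.1 is also misdirected: that result concerns a single automorphism in (NC), not a group action, so applying it ``to $N$'' gives only element-wise conclusions and lands you back in the same difficulty. The paper avoids all of this: for non-vector $G$ it routes $(3)\Rightarrow(2)\Rightarrow(1)\Rightarrow(6)$, and for vector $G$ it quotes $(1)\Leftrightarrow(2)$ and $(3)\Rightarrow(6)$ from \cite{CSh}, exactly as you do at the end.
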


\begin{proof} It is obvious that $(6)\implies (5) \implies (4)\implies (3) \implies(2)$. Also if $H$ acts distally on $\spg$, then so does $\ol{\Hc}$ 
since $\spg$ is compact (cf.\ \cite{E}, Theorem 1). Therefore $(2)\implies (1)$. Now suppose (1) holds. Let $M$ be the largest compact connected 
central subgroup of $G$. Then $M$ is characteristic in $G$ and in particular, it is invariant under the action of $\ol{\Hc}$. If every element of $\ol{\Hc}$ acts 
distally on $\spp_M$, then by \Cref{subgp-t}, $\{T|_M\mid T\in \ol{\Hc}\}$ is a finite group. Therefore $\ol{\Hc}$, and hence, $\Hc$ acts distally 
on $M$. If $G$ is not a vector group, by Theorem 1.5 of \cite{CSh}, $\ol{\Hc}$ is compact. Thus (6) holds, 
and hence $(1-6)$ are equivalent. 

If $G= \R^n$, a vector group, then the assertions $(1)\implies (2)$ and $(3)\implies (6)$ are proven in Theorem 1.5 in \cite{CSh}. 
\end{proof}

\section{Expansivity of actions of automorphisms of $\mT^n$ on $\spt$}

In this section we prove that $\mT^n$ does not have any automorphism that acts expansively on $\spt$ for any $n\geq 2$ (see \Cref{expansive-t}), 
and prove \Cref{Lie-t}. We also prove that every element of $\GL(n,\Z)$ has infinitely many orbits consisting of $(n-1)$-dimensional rational subspaces 
(see \Cref{rational}). 

We first state some well-known properties about expansive maps, as listed in Lemma 2.1 of \cite{PrSh}.

\begin{lem}[\cite{W}, Corollary 5.22 \& Theorem 5.26] \label{expa-prop} 
	Let $(X,d)$ be a compact metric space. Then the following hold for homeomorphisms of $X$:
\begin{itemize}
\item[(1)] Expansivity is a topological conjugacy invariant.
\item[(2)] Expansivity of a homeomorphism is independent of the metric chosen as long as the metric induces the topology of $X$. 
    However, expansivity constant may change.
    
\!\!\!\!\!\!\!\!\!\!\!\! Moreover, the following hold for any homeomorphism $\phi$ of $X$:

\item[(3)] $\phi^n$ is expansive for some $n\in \Z\setminus \{0\}$ if and only if $\phi^n$ is expansive for all $n\in \mathbb{Z}\setminus \{0\} $.
\item[(4)] For any $n\in \Z\setminus \{0\}$, if $\phi$ is expansive then $\phi^n$ has only finitely many fixed points.
\item[(5)] If $\phi$ is expansive and $Y$ is a closed $\phi$-invariant subset of $X$, the $\phi|_Y$ is also expansive.
\end{itemize}
\end{lem}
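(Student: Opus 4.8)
The plan is to derive all five properties from two features of the setting: on a compact metric space $(X,d)$ every continuous self-map is uniformly continuous, and the expansivity condition ``$\sup_{n\in\Z} d(\phi^n(x),\phi^n(y)) > \epsilon$ for all $x\ne y$'' is equivalent to its contrapositive form ``if $d(\phi^n(x),\phi^n(y))\le\epsilon$ for all $n\in\Z$, then $x=y$''. I would phrase each argument as transferring an $\epsilon$-expansivity condition along a uniformly continuous map, or as a direct compactness count. Throughout I would also use the elementary observation that $\phi$ and $\phi^{-1}$ have the same orbits $\{\phi^n(x)\}_{n\in\Z}$, so $\phi$ is expansive if and only if $\phi^{-1}$ is.

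For $(2)$ and $(1)$, which share one mechanism, let $d$ and $d'$ be two metrics inducing the topology of the compact space $X$; the identity map is then uniformly continuous from $(X,d')$ to $(X,d)$. If $\phi$ is $d$-expansive with constant $\epsilon$, choose $\epsilon'>0$ so that $d'(a,b)\le\epsilon'$ implies $d(a,b)\le\epsilon$. Then $d'(\phi^n(x),\phi^n(y))\le\epsilon'$ for all $n$ forces $d(\phi^n(x),\phi^n(y))\le\epsilon$ for all $n$, whence $x=y$; so $\phi$ is $d'$-expansive with constant $\epsilon'$, establishing $(2)$ (and showing the constant genuinely may change). For $(1)$, if $h\colon X\to X'$ is a topological conjugacy between $\phi$ and $\psi=h\phi h^{-1}$, then $h$ pushes the metric of $X$ forward to a metric $d'(u,v)=d(h^{-1}(u),h^{-1}(v))$ on $X'$ inducing its topology, with respect to which $\psi$ is expansive by direct transfer; by $(2)$ expansivity of $\psi$ does not depend on this choice, giving $(1)$.

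For $(3)$, I would treat positive powers and invoke the $\phi\leftrightarrow\phi^{-1}$ symmetry to cover negative ones. The easy direction is that if $\phi^n$ is expansive with constant $\epsilon$ (for $n>0$), then $\phi$ is expansive with the same constant: if $d(\phi^m(x),\phi^m(y))\le\epsilon$ for all $m\in\Z$, then in particular $d((\phi^n)^k(x),(\phi^n)^k(y))\le\epsilon$ for all $k$, forcing $x=y$. The substantive direction is the converse: given that $\phi$ is expansive with constant $\epsilon$, I would use uniform continuity of the finitely many maps $\phi^0,\phi^1,\dots,\phi^{n-1}$ to pick $\delta>0$ with $d(a,b)\le\delta \Rightarrow d(\phi^r(a),\phi^r(b))\le\epsilon$ for every $r\in\{0,\dots,n-1\}$. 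Then if $d((\phi^n)^k(x),(\phi^n)^k(y))\le\delta$ for all $k$, writing any $m=nk+r$ gives $d(\phi^m(x),\phi^m(y))=d(\phi^r(\phi^{nk}(x)),\phi^r(\phi^{nk}(y)))\le\epsilon$ for all $m$, so $x=y$; hence $\phi^n$ is expansive. Chaining these shows expansivity of one nonzero power is equivalent to expansivity of all nonzero powers.

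For $(4)$, by $(3)$ it suffices to show that an expansive homeomorphism $\psi$ (here $\psi=\phi^n$) has finitely many fixed points. If $x\ne y$ are both fixed by $\psi$, then $d(\psi^k(x),\psi^k(y))=d(x,y)$ is constant in $k$, so expansivity forces $d(x,y)>\epsilon$; thus the fixed-point set is $\epsilon$-separated, and by total boundedness of the compact space $X$ it must be finite. Finally $(5)$ is immediate: for a closed $\phi$-invariant $Y$, the restriction $\phi|_Y$ is a homeomorphism of the compact metric space $(Y,d|_Y)$, and the defining inequality for $\phi$ with constant $\epsilon$, applied to points of $Y$ whose orbits stay in $Y$, shows $\phi|_Y$ is expansive with the same constant. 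The only step requiring genuine care is the forward direction of $(3)$, where the interpolation over the residues $r=0,\dots,n-1$ must be controlled via uniform continuity; everything else is a direct transfer of the $\epsilon$-condition or a compactness count.
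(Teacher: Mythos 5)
Your proofs are correct. Note that the paper does not prove this lemma at all --- it is stated as a quotation of Corollary 5.22 and Theorem 5.26 of Walters \cite{W} --- so there is no internal argument to compare against; your arguments (uniform continuity of the identity map between the two metrics for (1)--(2), uniform continuity of the finitely many powers $\phi^0,\dots,\phi^{n-1}$ together with the $\phi\leftrightarrow\phi^{-1}$ symmetry for (3), the $\epsilon$-separation plus total-boundedness count for (4), and direct restriction of the $\epsilon$-condition for (5)) are precisely the standard textbook proofs of these facts.
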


Theorem 3.1 of \cite{PrSh} shows that a nontrivial connected Lie group does not admit any automorphism that acts expansively on 
$\Sub^a_G$. As $\spg\subset\Sub^a_G$, the following generalise Theorem 3.1 of \cite{PrSh} for the case when $G$ is an $n$-torus, 
$n\ge 2$. (For $n=1$, $\spg$ has only two elements, and hence it holds trivially that every $T\in\Aut(G)$ acts expansively on $\spg$.) 
 
 \begin{thm}\label{expansive-t}
    Let $G=\mT^n$, the $n$-torus, for any $n\geq 2$. Then $G$ does not admit any automorphism that acts expansively on $\spg$.
 \end{thm}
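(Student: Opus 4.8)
The plan is to restrict attention to the $T$-invariant compact subspace $\mfH_{n-1}=\mfS_{n-1}\cup\{\mT^n\}$ and to show that $T$ cannot be expansive there; by \Cref{expa-prop}(5) this will force $T$ to be non-expansive on all of $\spg$. So I would argue by contradiction: assume some $T\in\Aut(\mT^n)$ acts expansively on $\spg$, pass to $\mfH_{n-1}$, and produce a pair of distinct points whose forward and backward orbits never separate by more than any prescribed constant.

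First I would pin down the topology of $\mfH_{n-1}$. By \Cref{isol} each $H\in\mfS_{n-1}$ is isolated in $\mfH_{n-1}$ and $\mfH_{n-1}$ is compact; for $n\geq 2$ the set $\mfS_{n-1}$ is infinite, since there are infinitely many $(n-1)$-dimensional rational subspaces of $\R^n$. Hence $\mfH_{n-1}$ is the one-point compactification of the countable discrete space $\mfS_{n-1}$, with $\mT^n$ as its unique non-isolated point. Consequently every homeomorphism of $\mfH_{n-1}$ fixes $\mT^n$, and for every $\epsilon>0$ the set $F_\epsilon=\{H\in\mfS_{n-1}\mid d(H,\mT^n)\geq\epsilon\}$ is \emph{finite}, because $\mfS_{n-1}$ accumulates only at $\mT^n$ (here $d$ is the Hausdorff metric on $\Sub_{\mT^n}$ restricted to $\mfH_{n-1}$, and expansivity is metric-independent by \Cref{expa-prop}(2)).

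Next I would invoke the orbit count. Identifying each $H\in\mfS_{n-1}$ with its corresponding $(n-1)$-dimensional rational subspace of $\R^n$, the $T$-action on $\mfS_{n-1}$ is conjugate to the action of $T\in\GL(n,\Z)$ on $(n-1)$-dimensional rational subspaces, which by \Cref{rational} has infinitely many orbits. Now, assuming $T$ is expansive on $\spg$, \Cref{expa-prop}(5) gives that $T|_{\mfH_{n-1}}$ is expansive; let $\epsilon>0$ be an expansive constant. At most $|F_\epsilon|$ of the $\langle T\rangle$-orbits on $\mfS_{n-1}$ can meet the finite set $F_\epsilon$, while there are infinitely many such orbits; hence some orbit $\langle T\rangle\cdot H$ is disjoint from $F_\epsilon$. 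For this $H$ one has $d(T^k(H),\mT^n)<\epsilon$ for all $k\in\Z$, so, since $\mT^n$ is fixed, $\sup_k d(T^k(H),T^k(\mT^n))=\sup_k d(T^k(H),\mT^n)\leq\epsilon$. As $H\neq\mT^n$, this contradicts $\epsilon$ being an expansive constant, completing the argument.

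The substantive obstacle is not in this reduction but in \Cref{rational} itself: a single matrix $T\in\GL(n,\Z)$ generates only a cyclic group, which a priori could act with finitely many orbits on the infinite set of $(n-1)$-dimensional rational subspaces (as, e.g., $\Z$ acts transitively on $\Z$ by translation), so ruling this out is where the real work lies. Granting \Cref{rational}, the present proof reduces to the convergent-sequence description of $\mfH_{n-1}$ and a pigeonhole comparison of orbits against the finite set $F_\epsilon$, both of which are routine.
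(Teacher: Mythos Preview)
Your reduction is exactly the paper's Step~1: pass to the compact $T$-invariant subset $\mfH_{n-1}$, observe (via \Cref{isol}) that it is the one-point compactification of the countable discrete set $\mfS_{n-1}$ with $\mT^n$ as its unique accumulation point, and deduce that expansivity on $\mfH_{n-1}$ would force $\mfS_{n-1}$ to decompose into finitely many $T$-orbits. The paper phrases this via an open cover $\{\{H\}:H\in\mfS_{n-1}\}\cup\{B(G,\epsilon)\}$ and compactness, you via pigeonhole against the finite set $F_\epsilon$; these are the same argument.

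The gap is in your appeal to \Cref{rational}. That proposition, as stated, carries the hypothesis that $T$ preserves no nonzero proper rational subspace of $\R^n$; it is \emph{not} available for arbitrary $T\in\GL(n,\Z)$ at this point in the paper (the unconditional statement appears only in \Cref{rem}, \emph{after} the theorem, and is extracted from its proof). The paper therefore does not simply cite \Cref{rational}: it handles the remaining cases by two further steps. First, when $T$ is distal (equivalently some $T^m$ is unipotent with $T^m\ne\Id$), an explicit construction of infinitely many $(n-1)$-subtori with pairwise disjoint $T$-orbits is given. Second, when $T$ preserves a proper subtorus $H$, an induction on $\dim G$ is run: if $\dim(G/H)\ge 2$ lift orbits from $G/H$, and if $\dim(G/H)=1$ pass to a $1$-dimensional $T$-invariant subtorus $M$ and use $G/M$. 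Your write-up elides precisely this work, so as written the citation of \Cref{rational} is unjustified and the argument is incomplete. Once you supply the unipotent construction and the dimension induction, your proof and the paper's coincide.
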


 Before proving the theorem, we define a notion of rational subspaces in $\R^n$ and discuss their orbits under the action of 
 $\GL(n,\Z)$. A subspace $W$ of $\R^n$ is said to be a {\it rational subspace}, if $W$ is generated by  $W\cap\Z^n$; 
 equivalently, if $W\cap\Z^n$ is isomorphic to $\Z^k$, where $k=\dim(W)$. Note that $W$ is a rational subspace if and only if 
 the group $W+\Z^n$ is closed in $\R^n$. Each $k$-dimensional rational subspace $W$ corresponds to a $k$-subtorus ($k$-dimensional 
 compact connected subgroup) in $\mT^n$; namely, the image of $W$ in $\mT^n=\R^n/\Z^n$. Any subspace generated by some 
 integer points of $\R^n$ is a rational subspace. Note  that if $W$ is a rational subspace of $\R^n$, then $T(W)$ is also a rational subspace 
 (of $\R^n$) with the same dimension as that of $W$, for all $T\in\GL(n,\Z)$. There are countably infinitely many $k$-dimensional rational 
 subspaces in $\R^n$ for each $k$ with $0< k< n$. Let $\mathbb{H}_k$ (resp.\ $R_k$) denote the space of all $k$-dimensional subspaces 
 (resp.\ rational subspaces) of $\R^n$. Then $\mathbb{H}_k$ is a closed (compact) proper subspace of $\Sub_{\R^n}$, and $\mathbb{H}_k$ 
 (resp.\ $R_k$) is invariant under the action of $\GL(n,\R)$ (resp.\ $\GL(n,\Z)$). 
 It is easy to see that  $R_k$ is dense in $\mathbb{H}_k$, $0\leq k\leq n$. 
 
A linear map $T\in\GL(n,\R)$ is said to be {\it proximal} if it has a unique eigenvalue of maximum absolute value; such an eigenvalue is real. 
It is well-known that if $T$ is proximal, then for any $L\in\Sub^p_{\R^n}$, $T^n(L)\to L_\alpha$, where $L_\alpha$ in $\spp_{\R^n}$ is the 
one-dimensional eigenspace corresponding to the real eigenvalue $\alpha$ with maximum absolute value. 
 
 A linear map $T\in\GL(n,\R)$ is distal (i.e.\ it acts distally on $\R^n$) if and only if all its eigenvalues have 
 absolute value $1$; this is well-known and easy to prove (see e.g.\ \cite{Mo}, \cite{CoGu} or \cite{Ab1}). Moreover, if $T\in\GL(n,\Z)$ is distal, then all its 
 eigenvalues are roots of unity and $T^m$ is unipotent for some $m\in\N$. 
 
 The following proposition will be useful for the proof of \Cref{expansive-t}. Note that the condition below that $T$ does not keep 
 any nontrivial  proper rational subspace of $\R^n$ invariant implies that $T$ does not keep any proper subtorus of $\mT^n$ invariant, 
 and hence that either $T^m=\Id$ for some $m\in\N$ or that $T$ is ergodic on $\mT^n$, $n\geq 2$; this follows from Theorems 2.3 or 
Theorem 3.15 of \cite{KiSch}, (see also Proposition 2.1 of \cite{R} or that of \cite{RSh2}), as there is a connected $T$-invariant subgroup 
$H$ such that the $T$-action on $H$ is ergodic and the corresponding automorphism $\T$ of $G/H$ is distal. Now the condition implies 
that $G=H$ and $T$ is ergodic or $G=\{e\}$  and $T$ is distal, in the later case $T^m$ is unipotent for some $m\in\N$, and hence $T^m=\Id$, 
otherwise $T$ would keep a proper rational subspace invariant.  We will later see that \Cref{rational} holds for all $T\in\GL(n,\Z)$,  
 $n\geq 2$, without this extra condition on $T$ (see \Cref{rem}). 
 
 \begin{prop} \label{rational} Let $T\in\GL(n,\Z)$, $n\ge 2$.  Suppose $T$ does not keep any nonzero proper rational subspace of $\R^n$ 
 invariant. Then there are infinitely many $(n-1)$-dimensional rational subspaces with disjoint $T$-orbits in $\Sub_{\R^n}$. 
  \end{prop}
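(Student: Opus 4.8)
The plan is to reformulate the statement in terms of the action of $T$ on the primitive integer normal vectors of $(n-1)$-dimensional rational subspaces. Every $(n-1)$-dimensional rational subspace $W$ of $\R^n$ is the orthogonal complement $v^{\perp}$ of a primitive vector $v\in\Z^n$, unique up to sign, and a direct computation gives $T(v^{\perp})=(Sv)^{\perp}$, where $S=(T^{t})^{-1}\in\GL(n,\Z)$ ($t$ denoting transpose). Thus the $T$-orbit of $v^{\perp}$ corresponds to the $\langle S\rangle$-orbit of $\{\pm v\}$, and it suffices to show that $\langle S\rangle$ has infinitely many orbits on the set of primitive vectors in $\Z^n$; distinct orbits yield $(n-1)$-dimensional rational subspaces with pairwise disjoint (hence distinct) $T$-orbits. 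Since a rational subspace is $T$-invariant if and only if its orthogonal complement is $T^{t}$-invariant, the hypothesis passes to $S$, which therefore keeps no nonzero proper rational subspace invariant; and since the eigenvalues of $S$ are the reciprocals of those of $T$, the two are simultaneously distal or not. First I would record the dichotomy: under the hypothesis, either all eigenvalues of $T$ (equivalently $S$) have absolute value $1$, in which case, as observed before the statement, $T^m=\Id$ for some $m\in\N$; or $S$ has an eigenvalue of absolute value different from $1$, so that the spectral radius of $S$ exceeds $1$ and, as $\det S=\pm1$, $S$ also has an eigenvalue of absolute value less than $1$.

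In the finite-order case the argument is immediate: if $T^m=\Id$ then $S^m=\Id$, so every $\langle S\rangle$-orbit of primitive vectors has cardinality at most $m$. As there are infinitely many primitive vectors in $\Z^n$, there are infinitely many orbits, as required.

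The substantive case is when the spectral radius of $S$ exceeds $1$. Here I would use the $S$-invariant real splitting $\R^n=E^u\oplus E^c\oplus E^s$ into expanding, central and contracting parts (eigenvalues of absolute value $>1$, $=1$ and $<1$, respectively). Each of $E^c$, $E^u\oplus E^c$ and $E^s\oplus E^c$ is a \emph{proper} $S$-invariant real subspace: the first because $S$ is not distal, the other two because $\det S=\pm1$ forces both $E^u\ne0$ and $E^s\ne0$. For any proper $S$-invariant real subspace $V$, the real span of $V\cap\Z^n$ is a rational $S$-invariant subspace, hence $\{0\}$ by the hypothesis; so none of $E^c$, $E^u\oplus E^c$, $E^s\oplus E^c$ contains a nonzero integer vector. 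Consequently every nonzero $v\in\Z^n$ has nonzero projection onto both $E^u$ and $E^s$, which forces $\|S^{j}v\|\to\infty$ as $j\to+\infty$ (via the $E^u$-component) and as $j\to-\infty$ (via the $E^s$-component): for each such $v$ there are constants $c_v>0$ and $\mu_v>1$ with $\|S^{j}v\|\ge c_v\,\mu_v^{|j|}$ for all $j\in\Z$. Hence each orbit meets the ball of radius $R$ in only $O(\log R)$ points. If there were finitely many orbits of primitive vectors, the number of primitive vectors of norm at most $R$ would be $O(\log R)$; but this count grows like $R^{n}$ for $n\ge2$, a contradiction. Therefore $\langle S\rangle$ has infinitely many orbits on primitive vectors, completing this case and the proposition.

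I expect the main obstacle to be precisely the non-distal case, and within it the possibility that $S$ has eigenvalues on the unit circle that are not roots of unity (the Salem-type phenomenon), producing a nontrivial central subspace $E^c$. The danger is that an integer orbit could linger near $E^c$ and fail to escape to infinity in both time directions, which would break the density estimate. The decisive point is that the no-invariant-rational-subspace hypothesis forbids nonzero integer vectors in $E^c$, $E^u\oplus E^c$ and $E^s\oplus E^c$; this is exactly what guarantees two-sided exponential escape of every orbit and makes the counting argument go through.
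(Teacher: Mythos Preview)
Your argument is correct and takes a genuinely different route from the paper. The paper works directly in the Grassmannian $\mathbb{H}_{n-1}$: it picks eigenvalues $\alpha,\beta$ with $|\alpha|>|\beta|$, splits into three cases (both real; one real, one complex; both complex), and in each case shows that for every $W\in R_{n-1}$ the limit points of $\{T^m(W)\}$ lie in the proper closed set $\mathcal{A}=\{H\in\mathbb{H}_{n-1}\mid V_\alpha\subset H\text{ or }V_\beta\subset H\}$; since $R_{n-1}$ is dense in $\mathbb{H}_{n-1}$ one then chooses, inductively, infinitely many $W_k\in R_{n-1}$ outside a fixed neighbourhood $U$ of $\mathcal{A}$ and off the finitely many orbit points of $W_1,\ldots,W_{k-1}$ that lie outside $U$. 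By contrast, you dualise via $W=v^\perp\leftrightarrow \{\pm v\}$ and pass to $S=(T^t)^{-1}$, observe that the hypothesis forces $E^c$, $E^u\oplus E^c$, $E^s\oplus E^c$ to contain no nonzero integer point, and conclude by a lattice-point count that finitely many $S$-orbits would give only $O(\log R)$ primitive vectors of norm $\le R$, against the $\sim R^n$ that actually exist. Your approach is shorter, avoids the threefold real/complex case analysis entirely, and handles possible unit-modulus eigenvalues (the Salem phenomenon you flag) uniformly; the paper's approach, on the other hand, yields the extra geometric information that all orbit accumulation points lie in $\mathcal{A}$, and stays within the language of subspaces used in the rest of the section. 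One minor remark: your exponential lower bound $\|S^jv\|\ge c_v\mu^{|j|}$ in fact holds with $\mu>1$ independent of $v$ (only $c_v$ depends on $\|v_u\|,\|v_s\|$), which is exactly what makes the $O(\log R)$ bound uniform once finitely many orbit representatives are fixed.
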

  
 \begin{proof} For $n\geq 2$, let $T\in\GL(n,\Z)$ be such that $T$ does not keep any nonzero proper rational subspace of $\R^n$ invariant. 
 This is equivalent to the condition that  any nonzero proper $T$-invariant subspace is not contained in any proper rational subspace of $\R^n$. 
 If $V$ is a $T$-invariant subspace contained in a proper rational subspace (say) $W$ of $\R^n$, then $V'=(\ol{V+\Z^n})^0$ is a $T$-invariant rational 
 subspace of $\R^n$ and it is contained in $W$. Therefore, $V'$ is proper and the condition on $T$ as in the hypothesis 
 implies that $V'=\{0\}$, and hence that $V=\{0\}$.

 Now we prove the assertion that there are infinitely many $(n-1)$-dimensional rational subspaces with disjoint $T$-orbits. It is easy to see that 
 the assertion holds for $T$ if and only if it holds for $T^m$ for any $m\in\Z\mi\{0\}$. If $T^m=\Id$ 
 for some $m\in\N$, then all the rational subspaces are $T^m$-invariant, i.e.\ the assertion holds for $T^m$, and hence, for $T$. Now suppose 
 $T^m\ne\Id$ for any $m\in\N$. Note that $T$ is not distal. For if $T$ is distal, then for some $m\in\N$, $T^m$ is unipotent, i.e.\ it has an 
 eigenvalue $1$, and its eigenspace $V_1$ is $T$-invariant, and $V_1$ is a nonzero proper rational subspace, which leads to a contradiction. 
 Thus $T$ is not distal, i.e.\ it has an eigenvalue with absolute value not equal to $1$. Since $\det(T)=\pm\,1$, $T$ has at least one eigenvalue with 
 absolute value greater than $1$, and one with absolute value less than $1$. Thus one can choose two distinct eigenvalues of $T$ with 
 different absolute values, say, $\alpha$ and $\beta$ with $|\alpha|>|\beta|$. 
 
\medskip
 \noindent{\bf Case I}: Suppose $\alpha$ and $\beta$ as above are real, with $|\alpha|>|\beta|$. Let $V_{\alpha}$ (resp.\ $V_{\beta})$ be the 
  the eigenspace corresponding to $\alpha$ (resp.\ $\beta$).  For any proper rational subspace $W$, $W\cap V_\alpha$ is a 
  $T$-invariant subspace, and hence we have that $W\cap V_\alpha=\{0\}$. Since this holds in particular for any rational  
  space $W\in R_{n-1}$,  we have that $\dim(V_\alpha)=1$. Similarly, $V_\beta\cap W=\{0\}$ for every $W\in R_{n-1}$ and 
  $\dim(V_\beta)=1$. 
  
 Let $V_{\alpha\beta}=V_{\alpha}+V_{\beta}$. Then $\dim(V_{\alpha\beta})=2$ and $T(V_{\alpha\beta})=V_{\alpha\beta}$.  
Moreover, for any rational space $W\in R_{n-1}$, 
$\dim(W\cap V_{\alpha\beta})=1$ as $V_{\alpha\beta}\not\subset W$. Let $W\cap V_{\alpha\beta}=L_w$. Note that as 
$|\alpha|>|\beta|$, both $T|_{V_{\alpha\beta}}$ and $T^{-1}|_{V_{\alpha\beta}}$ are proximal. Then 
 $$
 T^m(L_w) \to V_\alpha\ \mbox{ and }\  T^{-m}(L_w) \to V_\beta\ \mbox{ as }\ 
 m\to \infty.$$
 Now suppose $T^{m_k}(W)\to H$ for some unbounded sequence $\{m_k\}\subset\Z$. Then 
 $V_\alpha\subset H$ (resp.\ $V_\beta\subset H)$ if $m_k\to \infty$ (resp.\ if $m_k\to -\infty$). 
 Let 
 $$\mathcal{A}=\{H\in\mathbb{H}_{n-1}\mid V_{\alpha}\subset H \ \mbox{ or } \ V_{\beta}\subset H\}.$$ Then $\mathcal{A}$ is a  
 closed subset of $\mathbb{H}_{n-1}$, the space of all $(n-1)$-dimensional subspaces of $\R^n$, and 
 $\mathbb{H}_{n-1}$ is a proper closed (compact) subset of $\Sub_{\R^n}$.  Any rational subspace $W\in R_{n-1}$  does not belong to 
 $\A$ as it contains neither $V_{\alpha}$ nor $V_{\beta}$. We can show that $\mathcal{A}$ is closed using Lemma 2.2 of \cite{ShY3}. 
 Since $\mathbb{H}_{n-1}$ is a (compact) metric space and $\mathcal{A}$ is a proper closed subset of it, we can choose a neighbourhood 
 $U$ of $\mathcal{A}$ such that $\mathbb{H}_{n-1}\mi U$ has nonempty interior, and hence $\mathbb{H}_{n-1}\mi U$ contains 
 infinitely many rational subspaces from $R_{n-1}$.  Now $U$ contains all but finitely many elements of the orbit $\{T^m(W)\}_{m\in \Z}$ 
 of any rational subspace $W\in R_{n-1}$. Choose $W_1\in R_{n-1}$ such that $W_1\not\in U$ and $T^m(W_1)\in U$  
 for all $m\in\Z$ with $|m|> l_1$ for some $l_1\in\N$. Since $R_{n-1}\mi U$ is infinite, we can choose $W_2\in R_{n-1}$ such that 
 $W_2\not\in U$ and $W_2\neq T^m(W_1)$ if $|m| \leq l_1$. Now $T^m(W_2)\in U$ for all $m\in\Z$ with $|m|> l_2$, for some $l_2\in\N$. 
 Then the $T$-orbit of $W_2$ is disjoint from that of $W_1$. For if $T^m(W_1)=W_2$ for some $m\in\Z$, then $|m|> l_1$, 
 but then $T^m(W_1)\in U$ while $W_2\not\in U$, which leads to a contradiction. 
 
For $k\geq 2$ and $1\leq i\leq k$, suppose there are 
 $W_i\in R_{n-1}$ such that $W_i\not\in U$  with $T^m(W_i)\in U$ for all $m\in\Z$ with $|m|> l_i$ and $W_i\neq T^m(W_j)$ for any 
 $1\leq j< i$ and $|m|\leq l_j$. Since $R_{n-1}\mi U$ is infinite, we can choose a rational subspace $W_{k+1}$  as follows:
 $$
 W_{k+1}\in R_{n-1},\ W_{k+1}\not\in U\ \mbox{ and }\ W_{k+1}\notin\cup_{i=1}^k\{T^m(W_i)\mid  |m|\leq l_i\}.$$ 
Hence the $T$-orbit of $W_{k+1}$ is disjoint from those of $W_j$, $1\leq j\leq k$. Moreover, $T^m(W_{k+1})\in U$ for all $m\in\Z$ with 
$|m|\geq l_{k+1}$ for some $l_{k+1}\in\N$. It is easy to see that the $T$-orbit of $W_{k+1}$ is disjoint from those of $W_1,\ldots, W_k$. 
Thus by induction, there exist infinitely many $(n-1)$-dimensional rational subspaces $W_k$, $k\in\N$, whose $T$-orbits are disjoint. 

   \medskip 
   \noindent{\bf Case II}: Suppose one of the eigenvalues $\alpha$ and $\beta$ of $T$ (as above) is real, and the other is complex. Replacing 
   $T$ by $T^{-1}$ if necessary, we may assume that $\alpha$ is real with $|\alpha|>1$, and $\beta$ is complex with $|\beta|<1$.  
   Let $V_{\alpha}$ be the eigenspace for $\alpha$, and $V_{\beta}$ be the  $2$-dimensional vector subspace of $\R^n$ 
   such that  $T(V_\beta)=V_\beta$, and the eigenvalues of $T|_{V_\beta}$ are $\beta$, $\bar\beta$. Let  
    $V_{\alpha \beta}=V_{\alpha}+V_{\beta}$. Then $\dim(V_{\alpha \beta})=3$, and $T$ keeps $V_{\alpha\beta}$ invariant.  
    There exist $\phi$ and $S$ in $\GL(V_{\alpha\beta})$
  such that $T|_{V_{\alpha\beta}}=\phi S=S\phi$, $S|_{V_\alpha}=T|_{V_\alpha}=\alpha\Id$, $S|_{V_\beta}=|\beta|\Id$, 
  $\phi|_{V_\alpha}=\Id$, $\phi$ keeps $V_\beta$ invariant and $\phi|_{V_\beta}$ is contained in a compact subgroup of $\GL(V_\beta)$. 
  In particular, $\phi$ is contained in a compact subgroup of $\GL(V_{\alpha\beta})$. Since $V_\alpha$ and 
  $V_\beta$ are $T$-invariant, as noted above, neither $V_\alpha$ nor $V_\beta$ is contained in any proper rational subspace. 

For a rational vector subspace $W\in R_{n-1}$, let $L_w=W\cap V_{\beta}$. 
Then $V_\beta=L_w + L'_w$ for some one dimensional subspace $L'_w$ of $V_\beta$. 
Here, $L'_w\not\subset W$ as $V_\beta\not\subset W$. Let $V_1:=V_{\alpha}+L'_w$. Then $\dim(V_1)= 2$ and 
$V_1$ is $S$-invariant. Now $\dim(W\cap V_1)=1$ as neither $V_{\alpha}$ nor $L'_w$ is contained 
in $W$.  Let  $S_1:=S|_{V_1}$. Then $S_1$ has two eigenvalues $\alpha$ and $|\beta|$ and both $S_ 1$ and $S_1^{-1}$ 
are proximal as $|\alpha|> |\beta|$. Let $L_1= W\cap V_1$. Then $S^m(L_1)=S_1^m(L_1)\to V_\alpha$ and 
$S^{-m}(L_1)=S_1^{-m}(L_1)\to L'_w$ in $\Sub^p_{\R^n}$ as $m\to \infty$. Now for $m\in\Z$, 
$$
T^m(W\cap V_{\alpha\beta})=\phi^m S^m(L_w+L_1)=\phi^m(S^m(L_w)+S^m(L_1))=\phi^m(L_w)+\phi^m(S^m(L_1)).$$ 
 As $\phi$ is contained in a compact group, all the limit points of $\{\phi^m\}$ keep $V_\alpha$ and $V_\beta$ invariant. 
 In particular, $\phi^m(L_w)\subset V_\beta$ for all $m$. Moreover, $\phi^m(S^m(L_1))\to V_\alpha$ as $m\to\infty$. Thus all 
 the limit points of $\{T^m(W\cap V_{\alpha\beta})\}$ contain $V_\alpha$ as $m\to\infty$
 
 The limit points of $\phi^{-m}(S^{-m}(L_1))$ are $\psi(L'_w)$ as $m\to\infty$, where $\psi$ is any limit point of 
 $\{\phi^m\mid -\infty< m \leq -1\}$. Thus the limit points of  $\{T^{-m}(W\cap V_{\alpha\beta})\}$ are contained in 
 $V_\beta$ as $m\to\infty$. As
  $\dim(V_\beta)=2=\dim(W\cap V_{\alpha\beta})$, we get that $T^{-m}(W\cap V_{\alpha\beta})\to V_\beta$ as $m\to\infty$.
 
Now we have that all the limit points of $\{T^m(W)\}$ contain $V_\alpha$ as $m\to\infty$, and contain $V_\beta$ as $m\to -\infty$, i.e.\ for 
any rational subspace $W\in R_{n-1}$, if $T^{m_k}(W)\to H$ then $V_\alpha\subset H$ when $m_k\to \infty$, and $V_\beta\subset H$ when 
$m_k\to -\infty$. Consider the set $\mathcal{A}=\{H\in \mathbb{H}_{n-1}\mid V_{\alpha}\subset H\ \mbox{ or }\ V_\beta\subset H\}$. Then 
$\mathcal{A}$ is a proper closed (compact) subspace of $\mathbb{H}_{n-1}$. As $R_{n-1}$ is dense in $\mathbb{H}_{n-1}$, using similar arguments 
as in Case I, we can find infinitely many rational subspaces in $R_{n-1}$ whose $T$-orbits are disjoint. 

\medskip
 \noindent{\bf Case III}: Now suppose both the eigenvalues $\alpha$ and $\beta$ of $T$ (as above) are complex with  
 $|\alpha|>1$ and $|\beta|<1$. Let $V_{\alpha}$ (resp.\ $V_{\beta}$) be a $T$-invariant $2$-dimensional subspace such that 
 $T|_{V_\alpha}$ (resp.\ $T|_{V_\beta}$) have $\alpha$ and $\bar\alpha$ (resp.\ $\beta$ and $\bar\beta$) as eigenvalues. Let 
 $V_{\alpha \beta}=V_{\alpha}+V_{\beta}$. Then $\dim(V_{\alpha\beta})=4$ and $T$ keeps $V_{\alpha\beta}$ invariant. 
 Now we have that $T|_{V_{\alpha\beta}}=\phi S=S\phi$ where, $S|_{V_\alpha}=|\alpha|\Id$, $S|_{V_\beta}=|\beta|\Id$, and 
 each of the maps $\phi|_{V_\alpha}$ and $\phi|_{V_\beta}$ generate a relatively compact group in 
 $\GL(V_\alpha)$ and $\GL(V_\beta)$ respectively. In particular $\phi$ is contained in a compact subgroup of $\GL(V_{\alpha\beta})$.

For any rational subspace $W\in R_{n-1}$, $V_\alpha\not\subset W$ and $V_\beta\not\subset W$. Let $L_{\alpha_1}=W \cap V_{\alpha}$ and let 
$L_{\beta_1}=W\cap V_{\beta}$. Then $V_{\alpha}=L_{\alpha_1}+L_{\alpha_2}$ and 
$V_{\beta}=L_{\beta_1}+L_{\beta_2}$ for some one-dimensional subspaces $L_{\alpha_2}$ in $V_\alpha$ and $L_{\beta_2}$ in $V_\beta$. 
Let $V_2=L_{\alpha_2}+ L_{\beta_2}$. Then $\dim(V_2)=2$ and $V_2$ is $S$-invariant. Let $S_2:=S|_{V_2}$. Then 
$S_2\in \GL(V_2)$ and $S_2$ and $S_2^{-1}$ are both proximal. Let $L_2=W\cap V_2$. Then $\dim(L_2)=1$ as neither $L_{\alpha_2}$ nor $L_{\beta_2}$ 
is contained in $W$. As $S_2$ is proximal, we get that $S^m(L_2)\to L_{\alpha_2}$ and 
$S^{-m}(L_2)\to L_{\beta_2}$ in $\Sub^p_{\R^n}$ as $m\to\infty$. Now 
$$
S^m(W\cap V_{\alpha\beta})=S^m(L_{\alpha_1}+ L_{\beta_1}+L_2)=L_{\alpha_1}+L_{\beta_1}+S^m(L_2). 
$$
Therefore, as $m\to\infty$,
\begin{eqnarray*} S^m(W\cap V_{\alpha\beta})\to L_{\alpha_1}+L_{\beta_1}+L_{\alpha_2} & = & V_\alpha+L_{\beta_1},\mbox{ and }\\
S^{-m}(W\cap V_{\alpha\beta})\to L_{\alpha_1}+L_{\beta_1}+L_{\beta_2} & = &L_{\alpha_1}+V_\beta.
\end{eqnarray*}
As $\phi$ keeps $V_\alpha$ and $V_\beta$ invariant, so does every limit point of $\{\phi^m\}_{m\in\Z}$. As $\{\phi^m\}_{m\in\Z}$ is relatively compact and 
$T^m(W\cap V_{\alpha\beta})=\phi^mS^m(W\cap V_{\alpha\beta})$ for all $m\in\Z$, we have that limit points of $\{T^m(W\cap V_{\alpha\beta})\mid m\in\N\}$ 
contain $V_\alpha$ and limit points of $\{T^{-m}(W\cap V_{\alpha\beta})\mid m\in\N\}$ contain $V_\beta$. 
Thus limit points of $\{T^m(W)\mid m\in\N\}$ contain $V_\alpha$ and $\{T^{-m}(W)\mid m\in\N\}$ contain $V_\beta$ for every rational subspace $W\in R_{n-1}$. 

Let $\mathcal{A}=\{H\in \mathbb{H}_{n-1} \mid V_{\alpha}\subset H \ \mbox{ or } \ V_{\beta}\subset H\}$. Then $\mathcal{A}$ is a proper closed (compact) subset 
of $\mathbb{H}_{n-1}$. As $R_{n-1}$ is dense in $\mathbb{H}_{n-1}$, using similar arguments as in Case I, we can find infinitely many rational subspaces in $R_{n-1}$ 
whose $T$-orbits are disjoint. 
 \end{proof}
 
Now we prove \Cref{expansive-t} where we will use \Cref{rational} for a particular class of automorphisms.

\begin{proof}[Proof of \Cref{expansive-t}] Let $G=\mT^n$, the $n$-torus, for any $n\geq 2$ and let $T\in\Aut(G)$. We want to show
that the $T$-action on $\spg$ is not expansive. If $T^m=\Id$ for some $m\in\N$, then by \Cref{expa-prop}, 
the $T$-action on $\spg$ is not expensive as $\spg$ is infinite. Now suppose that $T^m\ne\Id$ for any $m\in\N$. 

Recall that $\mfS_{n-1}$ is the collection of all $(n-1)$-dimensional subtori of $G=\mT^n$, and $\mfH_{n-1}= \mathfrak{S}_{n-1}\cup \{G\}$, 
both of these sets are $T$-invariant subsets of $\spg$, and $\mfH_{n-1}$ is closed in $\spg$, and hence it is compact. Moreover, 
every $H\in \mathfrak{S}_{n-1}$ is isolated in $\mfH_{n-1}$ (cf. \Cref{isol}); in particular, $\{H\}$ is open in $\mfH_{n-1}$. Note that for every $H$ 
in $\mfS_{n-1}$, $T^m(H)\to G$, if $m\to \pm\infty$ unless $T^m(H)=H$ for some $m\in\N$. However, since $\spg$ is countable, 
we are not able to use Theorem 1 of \cite{Re}, even if $T^m(H)\ne H$, $m\in\N$, for infinitely many $H\in\mfS_{n-1}$. 

\medskip
\noindent{\bf Step 1:} Let ${\bf d}$ be the metric on $\spg$. If possible, suppose that the $T$-action on $\spg$ is expansive. Then the 
$T$-action on $\mfH_{n-1}$ is also expansive; suppose $\epsilon>0$ is an expansive constant for this action. For any $H\in \mfS_{n-1} $, 
 there exists $m\in \Z$ such that ${\bf d}(T^m(H),G)>\epsilon$. Let $B(G,\epsilon)$ be the ball of radius $\epsilon$ centered at $G$ in $\spg$.
 Consider the collection $\{\{H\}\mid H\in\mfS_{n-1}\}\cup \{B(G, \epsilon)$\};  
it is an open cover of $\mfH_{n-1}$. As $\mfH_{n-1}$ is compact, there exist $H_1, \dots,H_k$ in $\mfS_{n-1}$ such that 
 $\mfH_{n-1}=B(G,\epsilon)\cup \{H_1\}\cup \dots \cup \{H_k\}$. As the $T$-action on $\spg$ is expansive with an expansive constant $\epsilon$, 
 for any $H\in \mathfrak{S}_{n-1}$, there exist $m\in \Z$ (which depends on $H$) such that $T^m(H)=H_i$ for some $i \in \{1,\dots,k\}$. 
This implies that $T$ has finitely many orbits in $\mfS_{n-1}$. We will now show that $T$ has infinitely many disjoint orbits in $\mfS_{n-1}$, 
which would contradict the expansivity of $T$.  

\medskip
\noindent{\bf Step 2:} Suppose that all the eigenvalues of $T$ have absolute value $1$, i.e.\ $T$ is distal. 
As $T\in\GL(n,\Z)$, some power of $T$ is unipotent (see e.g.\ Lemma 2.5 of \cite{Ab2}). The statement that there are infinitely many 
$(n-1)$-dimensional subtori with disjoint $T$-orbits is equivalent to the statement that there are infinitely many $(n-1)$-dimensional subtori 
with disjoint $T^m$-orbits for any $m\in\N$. Without loss of any generality, we may assume that $T$ is unipotent and that $T\neq \Id$. 
By Proposition 3.10 of \cite{ShY3}, there exist nontrivial  closed connected $T$-invariant subgroups 
$\{e\}=K_0\subsetneq K_1\cdots\subsetneq K_{l+1}=G$ of $\mT^n$ such that $T$ acts trivially on $K_i/K_{i-1}$, $1\leq i\leq l+1$ 
and $T$ does not act trivially on $K_i/K_{i-2}$, $2\leq i\leq l+1$ (see also 
Lemma 2.5 of \cite{Ab2}). 
Note that $l\ne 0$ as $T\neq\Id$. If $\dim(K_l)\leq n-2$, then $\dim(G/K_l)\geq 2$, and we can choose infinitely many distinct tori 
(closed connected subgroups) $B_m$ of co-dimension $1$ in $G/K_l$, $m\in\N$. Let $H_m$ be a subtorus in $G$ containing $K_l$ such that 
$H_m/K_l=B_m$ for each $m$. Then $\dim(H_m)=n-1$ and $H_m$ is $T$-invariant for every $m$. Moreover, $H_m$'s are distinct as $B_m$'s 
are so and $\{H_m\}$ is a $T$-orbit in $\mfS_{n-1}$ for each $m$. 

 Now suppose $\dim(K_l)=n-1$. There exists a closed one-parameter subgroup $C_0=\{x_t\}$ such that $G=C_0\times K_l$. Now
$T(x_t)=x_ty_t$, $t\in\R$, for some nontrivial closed one-parameter subgroup $\{y_t\}\subset K_l$. For $m\in \N$, let $C_m :=\{x_t y_{t/m}\}$.

Suppose $\dim(K_l)=1$. Then $l=1$ and $T|_{K_l}=\Id$.  Then $T(\{x_t\})=\{x_t y_t\}$ and $T^k(\{x_t\})=\{x_t y_t^k\}=\{x_t y_{kt}\}$ 
and the $T$-orbit of $C_0$ is $\{\{x_t y_{kt}\}\mid k\in \Z\}$. Moreover, $T^k(C_m)=\{x_t y_{(k+1/m)t}\}$ and 
the $T$-orbit of $C_m$ is $\{\{x_t y_{(k+1/m)t}\} \mid k\in \Z\}$. Observe that the $T$-orbits of $C_m$'s are disjoint. 

Now suppose $\dim(K_l)\geq 2$. We can choose a $T$-invariant subtorus $H$ such that $K_{l-1}\subset H\subset K_l$ and 
$K_l=\{y_t\}\times H$. Here, $H=K_{l-1}$ if $\dim(K_l/K_{l-1})=1$. Note that $T(y_t)\in y_tK_{l-1}\subset y_tH$, $t\in\R$. 
Let $H_m:=C_mH$, $m\in\N$. Then $\dim(H_m)=n-1$ and $H_m\in\mfS_{n-1}$, $m\in\N$. As $H$ is $T$-invariant, it is easy to see that the 
$T$-orbits of $H_k$ and $H_m$ are disjoint if $k\neq m$. Thus, if $T$ is not distal then $T$ has infinitely many disjoint orbits in $\mfS_{n-1}$.

\medskip
\noindent{\bf Step 3:} Now suppose $T$ admits an eigenvalue with absolute value other than $1$, i.e.\  $T$ is not distal. 
Let $\pi:\R^n\to \mT^n$ be the natural projection with $\ker\pi=\Z^n$. Then we have $T\in\GL(n,\Z)$ 
as a linear automorphism of $\R^n$ with $\pi\circ T=T\circ\pi$. Recall that $R_{n-1}$, the set of all $(n-1)$-dimensional rational subspace 
of $\R^n$, is a $T$-invariant subspace of $\Sub_{\R^n}$, then $\pi(R_{n-1})=\mfS_{n-1}$, and the map $R_{n-1}$ to $\mfS_{n-1}$
induced by $\pi$ is bijective. 

Suppose $T$ does not keep any nontrivial proper rational subspace of $\R^n$ invariant. Then by \Cref{rational} in this case, there are infinitely 
many rational subspaces in $R_{n-1}$ with disjoint $T$-orbits. Then there are infinitely many $(n-1)$-dimensional tori in $\spt$ with disjoint $T$-orbits. 

Suppose $n=2$, i.e.\ $\dim(G)=2$. Then since $T$ does not have an eigenvalue of absolute value $1$, both the eigenvalues of $T$ are real 
with absolute value other than $1$. Thus $T$ does not keep any nontrivial proper rational subspace of $\R^2$ invariant (as such a space 
would have dimension $1$, and it would mean that eigenvalues of $T$ have absolute value $1$). Now by \Cref{rational}, $T$ has infinitely many 
$1$-dimensional rational subspaces with disjoint $T$-orbits. Thus the assertion that $T$ has infinitely many co-dimension one subtori with disjoint 
$T$-orbits holds for all $T\in\Aut(G)$ for $n=2$. 

Suppose $n\geq 3$. Suppose the assertion that every $T\in\Aut(G)$ has infinitely many disjoint orbits in $\mfS_{k-1}$ 
holds for any torus $G$ with dimension $k$ such that $1<k<n$. Now suppose $G$ is such that $\dim(G)=n$. If $T$ is distal, then 
the assertion holds as shown in Step 2. If $T$ does not keep any nontrivial proper rational 
subspace of $\R^n$ invariant, equivalently, if $T$ does not keep any proper subtori invariant, then the assertion holds as shown above. 

Now suppose that $T$ keeps a nonzero proper rational subspace of $\R^n$ invariant. Then 
there is a proper subtorus $H$ on $G$ such that $T(H)=H$. Then $\dim(H)<n$. Let $\T\in\Aut(G/H)$ be the automorphism induced by $T$. 
Suppose the dimension of $G/H$ is greater than or equal to $2$. Since $\dim(G/H)<n$, by the induction hypothesis, $G/H$ has infinitely many 
subtori (say) $B_m$ of co-dimension $1$ with disjoint $\T$-orbits. Let $H_m$ be the subtori of $G$ containing $H$ such that $H_m/H=B_m$. 
Then $\dim(H_m)=n-1$ and $H_m$'s have disjoint $T$-orbits. 

Now suppose $\dim(G/H)=1$. Then the corresponding automorphism $\T$ of $G/H$, and hence, $T$ has a real eigenvalue 
which is $\pm 1$. We can choose a one-dimensional subtorus, say, $M$ of $G$ which is $T$-invariant. Now 
$\dim(G/M)=n-1\geq 2$. Thus arguing as above for $M$ instead of $H$, we have that $G$ has infinitely many 
subt-tori of dimension $n-1$ with disjoint $T$-orbits. By induction, the assertion that there are infinitely many subtori of 
co-dimension $1$ in $G$ with disjoint $T$-orbits holds. 

As noted at the end of Step 1, it follows that the $T$-action on $\spg$ is not expansive. 
\end{proof}

The proof of  \Cref{expansive-t} actually proves a stronger statement that the $T$-action on $\mfH_{n-1}$ is not expansive, where 
$\mfH_{n-1}$ consists of the whole group $G$ and all the $(n-1)$-dimensional subtori of $G$. 

\begin{rem} \label{rem} As shown in the proof of \Cref{expansive-t}, any $T\in\GL(n,\Z)$ admits infinitely many 
subtori of co-dimension 1 in $\mT^n$ with disjoint $T$-orbits. This is equivalent to the statement that $T$ has infinitely many disjoint 
$T$-orbits in $R_{n-1}$, the space of $(n-1)$-dimensional rational subspaces of $\R^n$. Thus \Cref{rational} holds for any $T\in\GL(n,\Z)$ without the 
condition on $T$ mentioned there. 
\end{rem}

The following corollary follows easily from \Cref{expansive-t}. We give a proof for the sake of completeness. 

\begin{cor} \label{Lie-t}
Let $G$ be a connected Lie group such that it contains a central torus of dimension at least $2$. Then $G$ does not admit any automorphism 
that acts expansively on $\spg$.
\end{cor}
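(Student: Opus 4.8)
The plan is to reduce the general connected Lie group case to the torus case already settled in \Cref{expansive-t}, using the fact that a central torus of dimension at least $2$ sits inside $G$ as a characteristic subgroup, together with the invariance of expansivity under passage to closed invariant subspaces. Let $M$ denote the maximal central torus of $G$; by hypothesis $\dim M \geq 2$, and $M$ is characteristic in $G$, so $T(M)=M$ for every $T \in \Aut(G)$. The idea is to embed $\spp_M$ as a closed $T$-invariant subspace of $\spg$ and then apply \Cref{expa-prop}\,(5), which states that the restriction of an expansive homeomorphism to a closed invariant set is again expansive.

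First I would argue by contradiction: suppose some $T \in \Aut(G)$ acts expansively on $\spg$. Since $M$ is a characteristic subgroup, $T$ restricts to an automorphism $T|_M$ of $M$, and the inclusion $M \hookrightarrow G$ induces a natural embedding of $\spp_M$ into $\spg$ (every closed one-parameter subgroup of $M$ is also one in $G$, and this inclusion respects the Chabauty topology). The key structural point I would verify is that the image of $\spp_M$ is closed in $\spg$ and invariant under the $T$-action; invariance is immediate from $T(M)=M$, and closedness follows because $M$ is compact so limits of subtori of $M$ remain inside $M$ (cf.\ the reasoning in \Cref{isol}). Identifying $\spp_M$ with this closed invariant subspace, \Cref{expa-prop}\,(5) then forces $T|_M$ to act expansively on $\spp_M$.

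But $M \cong \mT^k$ with $k = \dim M \geq 2$, so \Cref{expansive-t} says precisely that $\mT^k$ admits no automorphism acting expansively on $\spp_{\mT^k}$. This contradicts the conclusion that $T|_M$ acts expansively on $\spp_M$, and hence no such $T$ can exist. I expect the only point requiring genuine care — the main (mild) obstacle — to be checking that the topology $\spp_M$ inherits as a subspace of $\spg$ coincides with its intrinsic Chabauty topology, so that expansivity of the restriction in $\spg$ genuinely transfers to expansivity of $T|_M$ on $\spp_M$; this is where one uses that $M$ is a compact (hence closed) subgroup, making the Chabauty topologies compatible and the embedding a homeomorphism onto its image. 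Everything else is a direct appeal to \Cref{expa-prop}\,(5) and \Cref{expansive-t}.
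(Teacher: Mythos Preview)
Your proposal is correct and follows essentially the same route as the paper: both use that the maximal central torus is characteristic, identify $\spp_M$ as a closed $T$-invariant subspace of $\spg$, and then combine \Cref{expa-prop}\,(5) with \Cref{expansive-t} to conclude. The paper states this directly rather than by contradiction and does not pause over the compatibility of the Chabauty topologies, but the argument is the same.
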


\begin{proof} Let $C$ be the maximal central torus in $G$. By the hypothesis, the dimension of $C$ is at least $2$. Let $T\in\Aut(G)$. 
Then $T(C)=C$ and by \Cref{expansive-t}, $T$ does not act expansively on $\Sub^p_C$. As $\Sub^p_C$ is a closed subspace of $\spg$, 
by \Cref{expa-prop}\,(5), we get that $T$ does not act expansively on $\spg$. 
\end{proof}

\noindent{\bf Acknowledgments.} We thank Amala Bhave for valuable discussions.  Debamita Chatterjee acknowledges  
the UGC Non-NET research fellowship, and Himanshu Lekharu acknowledges the CSIR-JRF research fellowship from CSIR, Govt.\ of India.

{\advance\baselineskip-0.5mm
}
\end{document}